\newcommand{\R}{\mathbb{R}}
\newcommand{\C}{\mathbb{C}}
\newcommand{\ru}{\rho} 
\newcommand{\n}[1]{\left\vert {#1} \right\vert}                           
\newcommand{\del}{Q}                   
\newcommand{\cent}{t}               
\newcommand{\Bmet}[1]{ds_{{#1}}^2 }
\newcommand{\hj}{\theta_{n_j}}                 
\newcommand{\dimD}{d} 
\newcommand{\UDL}[1]{\underline{ {#1} }}                   
\newcommand{\Siegel}{D}
\title[Bergman kernel of minimal bounded homogeneous domain]{Some estimates of the Bergman kernel of minimal bounded homogeneous domains}
\author[H. Ishi]{Hideyuki Ishi}
\subjclass[2000]{Primary 32A25, Secondary 32H02, 32M10}
\keywords{Bergman kernel, minimal domain, bounded homogeneous domain, Siegel domain. }
\address{%
Hideyuki Ishi \endgraf
Graduate School of Mathematics \endgraf
Nagoya University \endgraf
Chikusa-ku, Nagoya, 464-8602 \endgraf
Japan
}
\email{hideyuki@math.nagoya-u.ac.jp}
\author[S. Yamaji]{Satoshi Yamaji}
\address{%
Satoshi Yamaji \endgraf
Graduate School of Mathematics \endgraf
Nagoya University \endgraf
Chikusa-ku, Nagoya, 464-8602 \endgraf
Japan
}
\email{satoshi.yamaji@math.nagoya-u.ac.jp}
\date{}
\begin{document}
\newtheorem{Def}{Definition}[section]
\newtheorem{Thm}[Def]{Theorem}
\newtheorem{IntroThm}{Theorem}
\renewcommand{\theIntroThm}{\Alph{IntroThm}}
\newtheorem{Lem}[Def]{Lemma}
\newtheorem{Exer}[Def]{Exercise}
\newtheorem{Prop}[Def]{Proposition}
\newtheorem{Cor}[Def]{Corollary}
\newtheorem{Ex}[Def]{Example}

\makeatletter                
 \renewcommand{\theequation}{%
   \thesection.\arabic{equation}}
   \@addtoreset{equation}{section}
\makeatother


\maketitle

\begin{abstract}
We describe the Bergman kernel of any bounded homogeneous domain in a minimal realization
 relating to the Bergman kernels of the Siegel disks. 
Taking advantage of this expression, 
 we obtain substantial estimates of the Bergman kernel of the homogeneous domain. 
\end{abstract}

\section{Introduction}

In this paper, 
 we discuss the Bergman kernel $K_{\mathcal{U}}$ of a bounded homogeneous domain $\mathcal{U}$,
 which we may assume to be minimal.
One of our main results in the present work is the following estimate of $K_{\mathcal{U}}$,
 which will play a key role
 to characterize the boundedness of the Toeplitz operators in \cite{Yamaji} (see also \cite{Zhu1}). 
\begin{IntroThm} \ \label{MainEst}
Take any $\ru>0$. Then, there exists $C_{\ru}>0$ such that 
$$  C_{\ru}^{-1} \leq \n{\frac{K_{\mathcal{U}}(z,a)}{K_{\mathcal{U}}(a,a)} } \leq C_{\ru} $$
for all $z,a \in \mathcal{U}$ with $\beta_{\mathcal{U}} (z,a) \leq \ru$, 
where $\beta_{\mathcal{U}}$ means the Bergman distance on $\mathcal{U}$. 
\end{IntroThm}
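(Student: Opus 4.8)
The plan is to split off a biholomorphic invariant and then reduce the whole assertion to a single Lipschitz-type estimate for the restriction of $\log K_{\mathcal{U}}$ to the diagonal, which is where the product expression for $K_{\mathcal{U}}$ does the work.

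First I would record the elementary consequences of the transformation rule $K_{\mathcal{U}}(gz,gw)\,J_g(z)\overline{J_g(w)}=K_{\mathcal{U}}(z,w)$, valid for every biholomorphic automorphism $g$ of $\mathcal{U}$ with holomorphic Jacobian $J_g$: the function $\beta_{\mathcal{U}}$ and the quantity $b(z,a):=\n{K_{\mathcal{U}}(z,a)}^{2}/\bigl(K_{\mathcal{U}}(z,z)K_{\mathcal{U}}(a,a)\bigr)$ are $g$-invariant, and $b\le 1$ by Cauchy--Schwarz in the Bergman space. Writing
\[
  \n{\frac{K_{\mathcal{U}}(z,a)}{K_{\mathcal{U}}(a,a)}}^{2}=b(z,a)\cdot\frac{K_{\mathcal{U}}(z,z)}{K_{\mathcal{U}}(a,a)},
\]
homogeneity lets us pick $g$ with $g(\cent)=a$, where $\cent$ is the centre of the minimal realization, and put $w:=g^{-1}z$, so that $\beta_{\mathcal{U}}(w,\cent)=\beta_{\mathcal{U}}(z,a)\le\ru$ and $b(z,a)=b(w,\cent)$; since $\mathcal{U}$ is minimal with centre $\cent$ one has $K_{\mathcal{U}}(\,\cdot\,,\cent)\equiv K_{\mathcal{U}}(\cent,\cent)$, hence $b(w,\cent)=K_{\mathcal{U}}(\cent,\cent)/K_{\mathcal{U}}(w,w)$. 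Thus both factors on the right are of the shape $K_{\mathcal{U}}(p,p)/K_{\mathcal{U}}(q,q)$ with $\beta_{\mathcal{U}}(p,q)\le\ru$, and the theorem follows once I show
\[
  \n{\log K_{\mathcal{U}}(p,p)-\log K_{\mathcal{U}}(q,q)}\le M\,\beta_{\mathcal{U}}(p,q)\qquad(p,q\in\mathcal{U})
\]
for a constant $M$ depending only on $\mathcal{U}$: indeed this gives $e^{-2M\ru}\le\n{K_{\mathcal{U}}(z,a)/K_{\mathcal{U}}(a,a)}^{2}\le e^{2M\ru}$, so $C_{\ru}=e^{M\ru}$ works.

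Second, to obtain the displayed Lipschitz estimate I would join $q$ to $p$ by a Bergman geodesic $\gamma$ and integrate: $\frac{d}{dt}\log K_{\mathcal{U}}(\gamma(t),\gamma(t))=2\,\mathrm{Re}\,\langle\partial\log K_{\mathcal{U}}(\gamma(t),\gamma(t)),\dot\gamma(t)\rangle$, so by Cauchy--Schwarz with respect to the Bergman metric it is enough to bound, uniformly over $\mathcal{U}$, the Bergman-metric length of the $(1,0)$-form $\partial\log K_{\mathcal{U}}(z,z)$. Here I would invoke the description of $K_{\mathcal{U}}$ through the Siegel disks: expanding $\log K_{\mathcal{U}}(z,z)$ as the corresponding sum over the Siegel disks $\Siegel_{n_j}$ reduces the estimate to the analogous one on each $\Siegel_{n_j}$, where $K_{\Siegel_{n_j}}$ is an explicit negative power of a ``determinant'' type function and the bound is a direct computation; equivalently, on each Siegel disk the split solvable group acts by affine maps of constant Jacobian, which forces the Bergman-metric length of $\partial\log K$ to be invariant, hence constant, there, and its transfer to $\mathcal{U}$ is governed by the explicit Jacobian of the realizing map. (Completeness of the Bergman metric of a bounded homogeneous domain guarantees the geodesic exists; alternatively one connects $q$ to $p$ by any piecewise-smooth curve of length arbitrarily close to $\beta_{\mathcal{U}}(p,q)$.)

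The genuine obstacle is this last ingredient: the uniform bound on the Bergman-metric length of $\partial\log K_{\mathcal{U}}(\,\cdot\,,\,\cdot\,)$ over all of $\mathcal{U}$, equivalently the uniform control of $\n{J_g(w)/J_g(\cent)}$ as $g$ ranges over the automorphism group and $w$ over a fixed Bergman ball about $\cent$. Since the automorphisms of the minimal \emph{bounded} model are not affine, this cannot be read off from invariance, and it is precisely here that passing to the Siegel disks---whose Bergman kernels and automorphism groups are completely explicit---is essential. Everything else in the argument (homogeneity, the transformation rule, minimality, and Cauchy--Schwarz) is routine.
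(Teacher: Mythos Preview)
Your reduction to the diagonal is correct but is an unnecessary detour. The paper applies the product formula (Theorem~\ref{MainResult1}) directly to the \emph{off-diagonal} ratio: since $K_{\mathcal{U}}(z,w) = C\prod_j K_{\mathcal{U}_{n_j}}(\theta_{n_j}(z),\theta_{n_j}(w))^{s_j/(n_j+1)}$, one gets immediately
\[
\left|\frac{K_{\mathcal{U}}(z,a)}{K_{\mathcal{U}}(a,a)}\right|
=\prod_{j=1}^{r}\left|\frac{K_{\mathcal{U}_{n_j}}(\theta_{n_j}(z),\theta_{n_j}(a))}{K_{\mathcal{U}_{n_j}}(\theta_{n_j}(a),\theta_{n_j}(a))}\right|^{s_j/(n_j+1)},
\]
and the whole question is reduced to the \emph{identical} estimate on each symmetric Siegel disk $\mathcal{U}_{n_j}$, where it is already known (the property you would call $(P)$). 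The passage from $\beta_{\mathcal{U}}(z,a)\le\rho$ to $\beta_{\mathcal{U}_{n_j}}(\theta_{n_j}(z),\theta_{n_j}(a))\le M_{n_j}\rho$ is exactly Proposition~\ref{BergmanMet}, the Lipschitz bound for the equivariant maps, proved by a one-point comparison plus homogeneity. Your route---split off the invariant $b(z,a)$, reduce to a Lipschitz bound for $\log K_{\mathcal{U}}$ on the diagonal, then bound $\lvert\partial\log K_{\mathcal{U}}(z,z)\rvert$ via the product formula---ultimately rests on the same two ingredients (Theorem~\ref{MainResult1} and Proposition~\ref{BergmanMet}), so the extra layer buys nothing. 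Two minor corrections: the split solvable group acts by affine maps on the Siegel \emph{upper half plane} $\mathcal{D}_n$, not on the bounded disk $\mathcal{U}_n$, so your invariance argument for $\partial\log K$ really lives on $\mathcal{D}_n$ and still needs a transfer through the Cayley transform; and the ``transfer governed by the explicit Jacobian of the realizing map'' that you invoke is precisely the content of Proposition~\ref{BergmanMet}, which deserves to be stated rather than alluded to.
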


In the case that $\mathcal{U}$ is the Harish-Chandra realization of a bounded symmetric domain,
 Theorem $\ref{MainEst}$ is easily verified from properties of the Bergman kernel
 (see Section 6).
However, for a general bounded homogeneous domain,
 the estimate does not seem to be trivial. 

Our idea for the proof of Theorem \ref{MainEst}
is to introduce certain
 equivariant holomorphic maps $\hj : \mathcal{U} \longrightarrow \mathcal{U}_{n_j}$ for $j=1,...,r (:= \mathrm{rank}\, \mathcal{U})$
 from $\mathcal{U}$ into the Siegel disk $\mathcal{U}_{n_j}$ of rank $n_j$. 
Inspired by Xu \cite{Xu}, we obtain the following formula for the description of $K_{\mathcal{U}}$.

\begin{IntroThm}[Theorem \ref{MainResult1}]\ \label{MainResult01} 
There exist
 integers $s_1,...,s_r$ 
 such that 
\begin{eqnarray*}
 K_{\mathcal{U}}(z,w) 
= \mathrm{Vol}(\mathcal{U})^{-1} \prod^r_{j=1} \left\{ \det \left( I_{n_j} - \hj (z) \overline{\hj (w)} \right) \right\}^{-s_j} 
\end{eqnarray*}
for $z,w \in \mathcal{U}$. 
\end{IntroThm}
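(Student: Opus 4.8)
The plan is to identify both sides as $H$-equivariant sesquiholomorphic kernels, to reduce the asserted equality to a single automorphy-factor identity over the solvable group $H$ acting on $\mathcal{U}$, and then to pin down the constant by evaluating at the centre of the minimal realization.

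First I would pass to the Siegel domain picture. By the structure theory of homogeneous bounded domains there is a split solvable Lie group $H$ acting simply transitively on $\mathcal{U}$, together with a Cayley transform carrying $\mathcal{U}$ onto a homogeneous Siegel domain $D = D(\Omega,\mathcal{Q})$ on which $H$ acts by \emph{affine} transformations; in particular the holomorphic Jacobian of each $h\in H$ on $D$ is constant, hence defines a character of $H$. On the bounded side, write $J_h$ for the holomorphic Jacobian of $h\in H$ acting on $\mathcal{U}$. Equivariance of the maps $\theta_{n_j}$ means there are homomorphisms $\sigma_j\colon H\to\mathrm{Aut}(\mathcal{U}_{n_j})$ with $\theta_{n_j}(hz)=\sigma_j(h)\,\theta_{n_j}(z)$, and the standard automorphy-factor identity on a Siegel disk produces nowhere-vanishing holomorphic functions $j_{\sigma_j(h)}$ with
\[
\det\left(I_{n_j}-\sigma_j(h)\zeta\,\overline{\sigma_j(h)\eta}\right)=j_{\sigma_j(h)}(\zeta)\,\overline{j_{\sigma_j(h)}(\eta)}\,\det\left(I_{n_j}-\zeta\bar\eta\right).
\]
Composing with $\theta_{n_j}$ turns each factor $\det(I_{n_j}-\theta_{n_j}(z)\overline{\theta_{n_j}(w)})$ into a building block that transforms under $h\in H$ by the holomorphic cocycle $z\mapsto j_{\sigma_j(h)}(\theta_{n_j}(z))$.

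Next, let $F(z,w):=\mathrm{Vol}(\mathcal{U})^{-1}\prod_{j=1}^r\det(I_{n_j}-\theta_{n_j}(z)\overline{\theta_{n_j}(w)})^{-s_j}$ denote the proposed right-hand side; like $K_{\mathcal{U}}$, it is holomorphic in $z$ and anti-holomorphic in $w$, so it is enough to check $K_{\mathcal{U}}(z,z)=F(z,z)$ on the diagonal, since a sesquiholomorphic kernel is determined by its diagonal restriction. From $K_{\mathcal{U}}(hz,hw)=J_h(z)^{-1}K_{\mathcal{U}}(z,w)\overline{J_h(w)}^{-1}$ and the cocycle above, together with $\theta_{n_j}(0)=0$ (as arranged in the construction of these maps), one computes
\[
K_{\mathcal{U}}(h\cdot 0,h\cdot 0)=|J_h(0)|^{-2}\,K_{\mathcal{U}}(0,0),\qquad F(h\cdot 0,h\cdot 0)=F(0,0)\prod_{j=1}^r|j_{\sigma_j(h)}(0)|^{-2s_j}.
\]
Since $\mathcal{U}$ is minimal with centre $0$, $K_{\mathcal{U}}(\cdot,0)\equiv K_{\mathcal{U}}(0,0)=\mathrm{Vol}(\mathcal{U})^{-1}$, and since $\theta_{n_j}(0)=0$ we also have $F(0,0)=\mathrm{Vol}(\mathcal{U})^{-1}$; so the two agree at $0$, and by transitivity of $H$ the proof comes down to the single identity
\[
|J_h(0)|^{2}=\prod_{j=1}^r\bigl|j_{\sigma_j(h)}(0)\bigr|^{2s_j},\qquad h\in H,
\]
for a suitable choice of integers $s_1,\dots,s_r$ (equivalently, the constant-free holomorphic identity $J_h=\prod_j (j_{\sigma_j(h)}\circ\theta_{n_j})^{s_j}$, which a priori holds up to a character of $H$ that the normalization forces to be trivial).

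This last identity is where the real work lies, and I expect it to be the main obstacle. Transported to $D$, where $H$ acts affinely, its right-hand side is the affine Jacobian character of $H$, while each $|j_{\sigma_j(h)}(0)|^{2}$ becomes a power of the basic relative invariant $\Delta_j$ of the homogeneous cone $\Omega$ --- the map $\theta_{n_j}$ being designed, following the method of \cite{Xu}, precisely so that the pulled-back Siegel-disk determinant realizes $\Delta_j$. Thus the identity amounts to the Gindikin--Pyatetskii-Shapiro description of the Bergman kernel of $D$ as a definite product $\prod_j\Delta_j^{-s_j}$, and the $s_j$ are read off from the characteristic-function data of $\Omega$; their integrality follows from the explicit description of the weights of $H$ attached to the normal $j$-algebra of $\mathcal{U}$. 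Carrying out this block-by-block matching of the $\theta_{n_j}$ with the basic relative invariants, and the attendant bookkeeping of exponents, is the substantive part of the argument --- the kernel comparison above being soft once that structural input is in place.
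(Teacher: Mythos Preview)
Your outline is sound and would lead to a correct proof, but it takes a different route from the paper's. The paper bypasses the cocycle machinery entirely via a single soft observation (Lemma~\ref{HyoukaLem1}): since $\mathcal{U}$ is minimal with centre $t$, one has $K_{\mathcal{U}}(\cdot,t)\equiv\mathrm{Vol}(\mathcal{U})^{-1}$, so the biholomorphism-invariant cross ratio of Bergman kernels collapses to
\[
\mathrm{Vol}(\mathcal{U})\,K_{\mathcal{U}}(\sigma(\zeta),\sigma(\zeta'))=\frac{K_D(\zeta,\zeta')\,K_D(p_0,p_0)}{K_D(\zeta,p_0)\,K_D(p_0,\zeta')}.
\]
From here the argument is a direct computation: plug in Gindikin's formula $K_D=C\,Q^{-(2\underline d+\underline b)}$, rewrite each $Q_k$ as a monomial in the principal minors $\det^{[\mu_j]}$ (Lemma~\ref{DelToDet--jyunbi}), observe that $\det^{[\mu_j]}\bigl(\tfrac{Z-\overline{Z'}}{2i}-F(U,U')\bigr)=\det\tfrac{\Phi_{n_j}(\zeta)-\overline{\Phi_{n_j}(\zeta')}}{2i}$, and apply the Cayley identity~(\ref{Cayleyeq}). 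Your reduction to an automorphy-factor identity on the solvable group is conceptually cleaner and explains \emph{why} such a formula must exist, but the ``real work'' you flag ends up being exactly the same minor-determinant bookkeeping the paper does; the cross-ratio trick simply gets there faster and yields the off-diagonal identity directly, without appealing to diagonal determination of sesquiholomorphic kernels.

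One point in your sketch needs correcting: it is not true that each $|j_{\sigma_j(h)}(0)|^{2}$ is a power of the single basic relative invariant $\Delta_j$. Under the embedding $\Phi$, the cocycle attached to $\theta_{n_j}$ computes to $(\det T^{[\mu_j]})^2=Q_1^{\nu_1}\cdots Q_{j-1}^{\nu_{j-1}}Q_j$ evaluated at $T\,{}^tT$, i.e.\ a genuine \emph{product} of the $Q_k$'s (equation~(\ref{DetNoHyouji3})). What is true is that the two families $\{\det^{[\mu_j]}\}_{j=1}^r$ and $\{Q_j\}_{j=1}^r$ are related by an integral unitriangular change of basis (Lemma~\ref{DelToDet--jyunbi}), and it is this change of basis that converts the Gindikin exponents $2d_k+b_k$ into the integers $s_j$. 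Your argument goes through once you replace ``a power of $\Delta_j$'' by ``an integral monomial in the $\Delta_k$'s generating the same character lattice''.
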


Recall that the Bergman kernel $K_{\mathcal{U}_{m}}$ of the Siegel disk $\mathcal{U}_{m}$ is given by 
\begin{eqnarray*}
 K_{\mathcal{U}_m}(Z,W) 
= {\rm Vol} \, (\mathcal{U}_m)^{-1} \det \left( I_{m} - Z \overline{W} \right)^{-(m+1)}.
\end{eqnarray*}
Thus we obtain
$$ K_{\mathcal{U}}(z, w) =
 C \prod^r_{j=1} K_{\mathcal{U}_{n_j}}(\hj(z),\hj(w))^{\frac{s_j}{n_j+1}},
$$  
 which implies that the estimate in Theorem \ref{MainEst} for $\mathcal{U}$
 is reduced to the ones for the symmetric domains $\mathcal{U}_{n_j}$.

Let us explain the organization of this paper. 
In section 2, we review properties of the minimal domains, the Siegel upper half plane and homogeneous Siegel domains. 
In particular, we present in section 2.3 the matrix realization of any homogeneous Siegel domain introduced by the first author \cite{Ishi2006}. 
Based on this realization, we observe a relation between the Bergman distances 
on a homogeneous Siegel domain and the Siegel upper half planes (section 3),
 and 
 introduce minor functions on a homogeneous cone in matrix realization, 
 which coincide with the generalized power functions in Gindikin \cite{Gin} (section 4). 
In section 5, we describe the Bergman kernel of minimal bounded homogeneous domains. 
Since the Bergman kernel of a minimal bounded homogeneous domain is expressed
as a ratio of the Bergman kernels of the corresponding Siegel domain (Lemma $\ref{HyoukaLem1}$), 
we obtain Theorem \ref{MainResult01}. 
In section 6, we prove Theorem \ref{MainEst},
 which yields another important estimate of $K_{\mathcal{U}}$ (Proposition~\ref{prop:KU-Mrho}). \\

\noindent
\textbf{Notation.} \ 
For an $N \times N$ matrix $A = (a_{ij})\in \mathrm{Mat}(N, \C)$ 
 and $k = 1, \dots, N$,
 we write $A^{[k]}$ for the $k \times k$ matrix $(a_{ij})_{1 \le i,j \le k}$. 
For real or complex domain $D$, we denote by $\mathrm{Cl}(D)$ the closure of $D$. 
The complexification of a real vector space $V$ will 
be denoted by $V_{\C}$.
\section{Preliminaries}  
\subsection{Minimal domain}
First of all, we recall the definition and properties of minimal domains (see \cite{I-K}, \cite{MM}). 
Let $D$ be a complex domain in $\C^n$ with finite volume 
and $\cent \in D$. 
We say that $D$ is a minimal domain with a center $\cent$ if the following condition is satisfied: 
for every biholomorphism $\psi : D \longrightarrow D^{\prime}$ with $\det J(\psi , \cent)=1$, we have 
\begin{eqnarray*}
{\rm Vol} \, (D^{\prime}) \geq {\rm Vol} \, (D).
\end{eqnarray*}
We have the following convenient criterion for a domain to be minimal 
(see \cite[Proposition 3.6]{I-K}, \cite[Theorem 3.1]{MM}). 

\begin{Prop} \ \label{I-K3.8} 
Let $D \subset \C^n$ be a bounded univalent domain and $t \in D$. 
Then, $D$ is a minimal domain with a center $\cent$ if and only if 
$$K_{D}(z,\cent) = \frac{1}{{\rm Vol \, }(D)}$$
for any $z \in D$. 
\end{Prop}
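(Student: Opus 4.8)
The plan is to prove Proposition~\ref{I-K3.8} by exploiting the transformation rule of the Bergman kernel under biholomorphisms together with the mean value / reproducing property, in both directions.

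\medskip

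\noindent\textbf{Sufficiency.} Suppose $K_D(z,\cent) = 1/\mathrm{Vol}(D)$ for all $z\in D$. Let $\psi: D \to D'$ be a biholomorphism with $\det J(\psi,\cent)=1$. The key identity is the transformation law
\begin{equation*}
K_D(z,w) = \overline{\det J(\psi,w)}\; K_{D'}\bigl(\psi(z),\psi(w)\bigr)\; \det J(\psi,z).
\end{equation*}
Evaluating at $z=w=\cent$ and using $\det J(\psi,\cent)=1$ gives $K_D(\cent,\cent) = K_{D'}(\psi(\cent),\psi(\cent))$. Now invoke the hypothesis: $K_D(\cent,\cent) = 1/\mathrm{Vol}(D)$. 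On the other hand, the general lower bound $K_{D'}(\zeta,\zeta) \ge 1/\mathrm{Vol}(D')$ (which follows from the extremal characterization of the Bergman kernel: $K_{D'}(\zeta,\zeta) = \sup\{ |f(\zeta)|^2 : f\in A^2(D'),\ \|f\|\le 1\} \ge |{\bf 1}(\zeta)|^2 / \|{\bf 1}\|^2 = 1/\mathrm{Vol}(D')$, testing against the constant function) yields
\begin{equation*}
\frac{1}{\mathrm{Vol}(D)} = K_{D'}\bigl(\psi(\cent),\psi(\cent)\bigr) \ge \frac{1}{\mathrm{Vol}(D')},
\end{equation*}
hence $\mathrm{Vol}(D') \ge \mathrm{Vol}(D)$, so $D$ is minimal with center $\cent$.

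\medskip

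\noindent\textbf{Necessity.} Conversely, suppose $D$ is minimal with center $\cent$. The idea is to use the Bergman kernel itself to build a competitor biholomorphism. Consider the map
\begin{equation*}
\psi(z) := \int_0^z \frac{K_D(\zeta,\cent)}{K_D(\cent,\cent)}\, d\zeta \quad\text{(for }n=1\text{; in general use a vector-valued primitive with Jacobian }K_D(z,\cent)/K_D(\cent,\cent)\text{)}.
\end{equation*}
More precisely, since $D$ is univalent one shows that $z \mapsto K_D(z,\cent)/K_D(\cent,\cent)$ is, up to a suitable normalization, the Jacobian determinant of a biholomorphism $\psi$ onto its image $D'$ with $\det J(\psi,\cent) = 1$; this is exactly the construction in \cite{I-K, MM} and is where the hypothesis of boundedness and univalence enters. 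By minimality, $\mathrm{Vol}(D') \ge \mathrm{Vol}(D)$. But a change of variables gives
\begin{equation*}
\mathrm{Vol}(D') = \int_D |\det J(\psi,z)|^2\, dV(z) = \frac{1}{|K_D(\cent,\cent)|^2}\int_D |K_D(z,\cent)|^2\, dV(z) = \frac{K_D(\cent,\cent)}{|K_D(\cent,\cent)|^2} = \frac{1}{K_D(\cent,\cent)},
\end{equation*}
using the reproducing property $\int_D |K_D(z,\cent)|^2\,dV(z) = K_D(\cent,\cent)$. Combining with the universal bound $K_D(\cent,\cent)\ge 1/\mathrm{Vol}(D)$ forces $K_D(\cent,\cent) = 1/\mathrm{Vol}(D)$, i.e.\ equality in the extremal problem. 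Equality there means the constant function realizes the supremum defining $K_D(\cdot,\cent)$ restricted appropriately, which forces $K_D(z,\cent)$ to be constant in $z$, equal to $1/\mathrm{Vol}(D)$.

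\medskip

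\noindent\textbf{Main obstacle.} The delicate point is the necessity direction: justifying that $z \mapsto K_D(z,\cent)/K_D(\cent,\cent)$ genuinely arises as the complex Jacobian of a \emph{globally defined, injective} holomorphic map $\psi$ on all of $D$. This requires that the Bergman kernel $K_D(\cdot,\cent)$ be zero-free on $D$ and that an appropriate primitive exists and is univalent — precisely the content invoked from \cite[Proposition 3.6]{I-K} and \cite[Theorem 3.1]{MM}, which rests on $D$ being bounded and univalent. Once that map is available, the volume computation and the extremal characterization close the argument cleanly; the sufficiency direction, by contrast, is essentially a one-line consequence of the transformation rule plus the constant-function lower bound.
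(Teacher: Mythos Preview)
The paper does not supply its own proof of this proposition; it is quoted as a known criterion with references to \cite[Proposition~3.6]{I-K} and \cite[Theorem~3.1]{MM}. So there is no in-paper argument to compare against, and your write-up already goes further than the paper by sketching why the criterion holds.

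Your sufficiency direction is complete and correct: the transformation law for the Bergman kernel plus the elementary inequality $K_{D'}(\zeta,\zeta)\ge 1/\mathrm{Vol}(D')$ (from testing the extremal problem against the constant function) gives $\mathrm{Vol}(D')\ge \mathrm{Vol}(D)$ immediately. For the necessity direction, your overall scheme---build a competitor $\psi$ with $\det J(\psi,z)=K_D(z,\cent)/K_D(\cent,\cent)$, compute $\mathrm{Vol}(D')=1/K_D(\cent,\cent)$ via the reproducing identity, conclude $K_D(\cent,\cent)=1/\mathrm{Vol}(D)$, and then use uniqueness of the extremizer to force $K_D(\cdot,\cent)$ constant---is exactly the Maschler argument, and you are right to flag that the genuine work sits in producing a \emph{global, injective} $\psi$ with that prescribed Jacobian determinant. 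Two small points worth tightening: (i) for $n>1$ the phrase ``vector-valued primitive'' is too vague---one standard device is a shear such as $\psi(z)=\bigl(\int^{z_1} K_D(\zeta_1,z_2,\dots,z_n;\cent)/K_D(\cent,\cent)\,d\zeta_1,\,z_2,\dots,z_n\bigr)$, whose Jacobian determinant is the desired scalar, but even then well-definedness and univalence on all of $D$ are not automatic; (ii) your volume identity $\mathrm{Vol}(D')=\int_D|\det J(\psi,z)|^2\,dV$ already presupposes injectivity of $\psi$, so that hypothesis cannot be relaxed. Since you explicitly defer these points to \cite{I-K,MM}, precisely as the paper does, your proposal is a faithful expansion of what the paper merely cites.
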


For example, a circular domain is minimal with a center $0$,
 so that the Harish-Chandra realization for a bounded symmetric domain is also minimal,
 while there are many other minimal realizations for the symmetric domain.  
Recently in \cite{I-K}, a representative domain turns out to be a nice bounded realization 
 of a bounded homogeneous domain,
 which is a generalization of the Harish-Chandra realization.
The representative bounded homogeneous domain is always 
 a minimal domain with a center $0$ (see \cite[Proposition 3.8]{I-K}),
 though it is not circular unless it is symmetric.  
Therefore, in conclusion,
 every bounded homogeneous domain is biholomorphic to a minimal bounded homogeneous domain. 

\subsection{Siegel upper half plane}
Here we present basic facts used in this paper 
 about the Siegel upper half plane $\mathcal{D}_n$.
It is well known that 
 the real symplectic group $Sp(2n, \R)$ acts on $\mathcal{D}_n$ transitively 
 as linear fractional transforms:
$$
 \alpha \cdot Z = (PZ + Q)(RZ + S)^{-1} 
\qquad \left(
\alpha = \begin{pmatrix} P & Q \\ R & S \end{pmatrix} \in Sp(2n, \R),\,\, Z \in \mathcal{D}_n
\right).
$$
Let $H_n$ be the group of $n \times n$ lower triangular matrices with positive diagonals.
We define 
\begin{eqnarray*}
B_n :=  \left\{ \left(
\begin{matrix}
T & 0 \\
 0  & {^t}T^{-1}
\end{matrix}
\right) \left(
\begin{matrix}
I & X \\
0 & I \\
\end{matrix}
\right) \bigg| 
 X \in {\rm Sym}(n,\R), 
 T \in H_n
\right\} .
\end{eqnarray*}
Then $B_n$ is a maximal connected split solvable Lie subgroup of $Sp(2n, \R)$.
The action of $B_n$ on $\mathcal{D}_n$ is described as
\begin{align} \label{eqn:B-action}
 \beta \cdot Z = T\,Z\,{^t}T + X
 \qquad \left( \beta = \left(
\begin{matrix}
T & 0 \\
 0  & {^t}T^{-1}
\end{matrix}
\right) \left(
\begin{matrix}
I & X \\
0 & I \\
\end{matrix}
\right) \in B_n,\,\,Z \in \mathcal{D}_n
\right),
\end{align}
so that the group $B_n$ acts on $\mathcal{D}_n$ simply transitively.

Let $\mathcal{C}_n$ be the Cayley transform from $\mathcal{D}_n$
 onto the Siegel disk $\mathcal{U}_n$ defined by 
\begin{eqnarray}
 \mathcal{C}_n (Z) := (Z-iI_n)(Z+iI_n)^{-1}
\end{eqnarray}
for $Z \in \mathcal{D}_n$. 
It is easy to see that
\begin{eqnarray}
I_n - \mathcal{C}_n (Z) \ \overline{\mathcal{C}_n (Z^{\prime})}
 = \left( \frac{Z +iI_n}{2i} \right)^{-1} 
 \left( \frac{Z - \overline{Z^{\prime}}}{2i} \right) 
 \overline{\left( \frac{Z^{\prime} + iI_n  }{2i} \right)^{-1} }. \label{Cayleyeq}
\end{eqnarray}

\subsection{Homogeneous Siegel domain}
Let $\Omega$ be a regular open convex cone in a real vector space $V$,
 $W$ a complex vector space, and $F : W \times W \to V_{\C}$ a Hermitian map
 such that
 $F(u,u) \in \mathrm{Cl}(\Omega) \setminus \{0\}$ 
 for $u \in W \setminus \{0\}$.
Then the Siegel domain $\Siegel(\Omega, F) \subset V_{\C} \times W$ is defined by
 $$
 \Siegel(\Omega, F) := \{\, (z,u) \in V_{\C} \times W \,|\,\mathrm{Im}\, z - F(u,u) \in \Omega \,\}.
 $$
For the degenerate case $F=0$ with $W = \{0\}$,
 the Siegel domain becomes a tube domain
 $\Siegel(\Omega) = V + i \Omega \subset V_{\C}$.
It is known that
 every bounded homogeneous domain is biholomorphic to some homogeneous Siegel domain
 (\cite{VGS}).
On the other hand, 
 it is shown in \cite{Ishi2006} that every homogeneous Siegel domain is realized 
 as a set of complex matrices with specific block decompositions in the following way.

Let $\nu_1, \dots, \nu_r$ be positive numbers,
 and $\{\mathcal{V}_{lk}\}_{1 \le k < l \le r}$ a system of real vector spaces
 $\mathcal{V}_{lk} \subset {\rm Mat}(\nu_l,\nu_k;\R)$ satisfying
\begin{eqnarray*}
({\rm V1}) && A \in \mathcal{V}_{lk}, \, B \in \mathcal{V}_{ki}  \Longrightarrow AB \in \mathcal{V}_{li} {\rm \ for \ } 1 \leq i < k < l \leq r, \\
({\rm V2}) && A \in \mathcal{V}_{li}, \, B \in \mathcal{V}_{ki}  \Longrightarrow A \,^t B \in \mathcal{V}_{lk} {\rm \ for \ } 1 \leq i < k < l \leq r, \\
({\rm V3}) && A \in \mathcal{V}_{lk}  \Longrightarrow A \,^t A \in \R I_{\nu_l} {\rm \ for \ } 1 \leq k < l \leq r.
\end{eqnarray*}
We set $\nu := \nu_1 + \dots + \nu_r$.
Let
 $\mathcal{V} \subset \mathrm{Sym}(\nu, \R)$ be the space of real symmetric matrices $X$ of the form
$$
 \begin{pmatrix}  
 X_{11} & {^t}X_{21} & \dots & {^t} X_{r1} \\
 X_{21} & X_{22}    &        & {^t} X_{r2} \\
 \vdots &  & \ddots & & \\
 X_{r1} & X_{r2} & & X_{rr} 
\end{pmatrix}
\quad \left( 
 \begin{gathered}
 X_{kk} = x_{kk} I_{\nu_k},\,\,x_{kk} \in \R \quad (k=1, \dots, r)\\
 X_{lk} \in \mathcal{V}_{lk} \quad (1 \le k < l \le r)
 \end{gathered} 
 \right).
$$
We define $\Omega_{\mathcal{V}} := \{\, X \in \mathcal{V}\,|\,X$ is positive definite$\}$.
Then $\Omega_{\mathcal{V}}$ is a regular open convex cone in the vector space $\mathcal{V}$.
Let $\nu_0$ be a positive integer, and 
 $\{\mathcal{W}_k\}_{1 \le k \le r}$ a system of complex vector spaces
 $\mathcal{W}_k \subset {\rm Mat \,}(\nu_k, \nu_0;\C)$ 
 satisfying
\begin{eqnarray*}
({\rm W1}) && A \in \mathcal{V}_{lk}, \, C \in \mathcal{W}_{k}  
 \Longrightarrow AC \in \mathcal{W}_{l} {\rm \ for \ } 1 \leq  k < l \leq r, \\
({\rm W2}) && C \in \mathcal{W}_{l},  \, C^{\prime} \in \mathcal{W}_{k}  \Longrightarrow C \,^t \overline{C^{\prime}} \in (\mathcal{V}_{lk})_{\C} {\rm \ for \ } 1 \leq i < l \leq r, \\
({\rm W3}) && C \in \mathcal{W}_{k}  \Longrightarrow C \,^t \overline{C} + \overline{C} \,^t C \in 
\R I_{\nu_k} {\rm \ for \ } 1 \leq  k \leq r. 
\end{eqnarray*}
Let $\mathcal{W}$ be the space of complex matrices $U$ of the form
$$
 U = \begin{pmatrix} U_1 \\ U_2 \\ \vdots \\ U_r \end{pmatrix}
 \in \mathrm{Mat}(\nu,\nu_0;\C)
 \quad 
 \Bigl( U_k \in \mathcal{W}_k,\,\,\,k=1, \dots, r \Bigr).
$$
For $U,U' \in \mathcal{W}$, 
 we define
 $F_{\mathcal{V}, \mathcal{W}}(U,U') := (U {^t}\overline{U'} + \overline{U'}{^t}U)/4$.
We see from (W1) -- (W3) that $F_{\mathcal{V}, \mathcal{W}}$ is a $\mathcal{V}_{\C}$-valued Hermitian form.
Furthermore, 
 it is easy to see that
 $F_{\mathcal{V}, \mathcal{W}}(U,U) \in \mathrm{Cl}(\Omega_{\mathcal{V}}) \setminus \{0\}$ 
 for $U \in \mathcal{W} \setminus \{0\}$. 
Then 
 the Siegel domain
 $\Siegel(\Omega_{\mathcal{V}}, F_{\mathcal{V}, \mathcal{W}})$
 is defined by
 $$
 \Siegel(\Omega_{\mathcal{V}}, F_{\mathcal{V}, \mathcal{W}}) 
 := \{ (Z,U) \in \mathcal{V}_{\C} \times \mathcal{W} \,|\, 
      \mathrm{Im}\, Z - F_{\mathcal{V}, \mathcal{W}}(U,U) \in  \Omega_{\mathcal{V}} \,\},
 $$
 which we shall see to be homogeneous.
First, 
 let $H$ be the set of $\nu \times \nu$ lower triangular matrices $T$ of the form
$$
 \begin{pmatrix}
 T_{11} &  &  & \\
 T_{21} & T_{22}    &   & \\
 \vdots &  & \ddots & & \\
 T_{r1} & T_{r2} & & T_{rr} 
\end{pmatrix}
\quad \left( 
 \begin{gathered}
 T_{kk} = t_{kk} I_{\nu_k},\,\,t_{kk}>0 \quad (k=1, \dots, r) \\
 T_{lk} \in \mathcal{V}_{lk} \quad(1 \le k < l \le r)
 \end{gathered} 
 \right).
$$
Then $H$ is a subgroup of the solvable group $H_{\nu}$ thanks to (V1).
Moreover,
 $H$ acts on the cone $\Omega_{\mathcal{V}}$ simply transitively 
 by $\Omega_{\mathcal{V}} \owns X \mapsto T X {^t}T \in \Omega_{\mathcal{V}}\,\,\,(T \in H)$.
For $X \in \mathcal{V}, U \in \mathcal{W}$ and $T \in H$, 
we define an affine transform $b(X,U,T)$ on $\mathcal{V}_{\C} \times \mathcal{W}$ by 
\begin{eqnarray*} 
b(X,U,T) \cdot \zeta^{\prime} 
 &:=& (T \, Z^{\prime} \, {^t}T +X+2i \, F_{\mathcal{V}, \mathcal{W}}(T U^{\prime},U)+i F_{\mathcal{V}, \mathcal{W}}(U,U), \ TU^{\prime}+U)  \\
 & & \hspace{3em} (\zeta^{\prime} =(Z^{\prime},U^{\prime}) \in \mathcal{V}_{\C} \times \mathcal{W})  .
\end{eqnarray*}
Then, each $b(X,U,T)$ preserves the domain $\Siegel(\Omega_{\mathcal{V}}, F_{\mathcal{V}, \mathcal{W}})$.
Let $B$ be the set 
$ \{ b(X,U,T) \mid  X \in \mathcal{V}, U \in \mathcal{W}, T \in H \} $, 
which forms a split solvable Lie group acting on $\Siegel(\Omega_{\mathcal{V}}, F_{\mathcal{V}, \mathcal{W}})$
 simply transitively.
In particular,
 the Siegel domain $\Siegel(\Omega_{\mathcal{V}}, F_{\mathcal{V}, \mathcal{W}})$ is homogeneous.
Since every homogeneous Siegel domain can be obtained this way,
 we shall consider only the Siegel domains of this form.
In particular, for the treatment of a Siegel domain of tube type, 
 we set
 $\mathcal{W}_k = \{0\} \subset \mathrm{Mat}(\nu_k,\nu_0;\C) \,\,\,(k=1, \dots, r)$.
Thus we write $F$ and $\Siegel$ 
 for $F_{\mathcal{V}, \mathcal{W}}$ and $\Siegel(\Omega_{\mathcal{V}}, F_{\mathcal{V}, \mathcal{W}})$
 respectively in what follows for simplicity.

The Siegel domain $\Siegel$ is embedded into the Siegel disk $\mathcal{D}_N$
 ($N := \nu_0 + \nu_1 + \dots + \nu_r = \nu_0 + \nu$) equivariantly with respect to the action of $B$.
Namely,
 if we define an injective holomorphic map $\Phi : \Siegel \to \mathcal{D}_N$ and
 a group homomorphism $\phi : B \to B_N$ by   
\begin{equation}
\Phi(\zeta) := \left(
\begin{matrix}
 i I_{\nu_0}  &  \,^{t}U \\
    U    &  Z-\frac{i}{2} U  \,^{t}U
\end{matrix}
\right) 
\quad (\zeta = (Z,U) \in \Siegel), 
\label{MapPhi}
\end{equation}
 and 
 \begin{align*} 
{} & \phi (b(X,U,T)) \\
& :=     \left(
\begin{matrix}
I_{\nu_0} &         &                   &  {\rm Re \, } {^t}U  \\ 
          & I_{\nu} & {\rm Re \, } U  & X + \frac{1}{2} {\rm Im \, } U \, {^t}U \\
          &         &    I_{\nu_0}      &      \\
          &         &                   & I_{\nu}  \\
\end{matrix}
\right)
\left(
\begin{matrix}
       I_{\nu_0} &      &           &    \\ 
{\rm Im \, } U   & T  &           & \\
                 &      & I_{\nu_0} & - {\rm Im \, } {^t}U {^t} T^{-1}   \\
                 &      &           & {^t} T^{-1}  \\
\end{matrix}
\right) 
\end{align*}
 respectively,
 we have by \cite[{\rm P.}601]{Ishi2006}
 \begin{eqnarray}
 \phi (b) \cdot \Phi(\zeta) = \Phi(b \cdot \zeta)  \label{Sayou1}
 \end{eqnarray}
 for any $b \in B$ and $\zeta \in \Siegel$.

\section{Equivariant maps into the Siegel upper half planes}
For $n=1, \dots, N$,
 let $\pi_n : \mathcal{D}_N \to \mathcal{D}_n$ be the surjective holomorphic map
 given by $\pi_n(Z) := Z^{[n]}\,\,\,(Z \in \mathcal{D}_N)$.
Let us observe the equivariance of $\pi_n$
 under actions of solvable groups.  
We define $\rho_n : B_N \longrightarrow B_n$ by 
\begin{eqnarray*}
\rho_n \left(  \left(
\begin{matrix}
T & 0 \\
0 & {^t}T^{-1}
\end{matrix}
\right) \left(
\begin{matrix}
I & X \\
0 & I \\
\end{matrix}
\right)  \right) =  \left(
\begin{matrix}
T^{[n]} & 0 \\
0 & {{^t}T^{[n]}}^{-1}
\end{matrix}
\right) \left(
\begin{matrix}
I & X^{[n]} \\
0 & I \\
\end{matrix}
\right).
\end{eqnarray*}
Then $\rho_n$ is a group homomorphism, and we see from (\ref{eqn:B-action}) that
\begin{eqnarray}
\pi_n(\beta \cdot Z) = \rho_n(\beta) \cdot \pi_n(Z) \label{Sayou2}
\end{eqnarray}
for any $Z \in \mathcal{D}_N$ and $\beta \in B_N$. 
Now we define $\Phi_n := \pi_n \circ \Phi$ and $\phi_n :=\rho_n \circ \phi$. 
From $(\ref{Sayou1})$ and $(\ref{Sayou2})$, we obtain the following proposition. 

\begin{Prop}\ \label{MapKakan}
One has  
\begin{eqnarray}
\phi_n(b) \cdot  \Phi_n(\zeta) = \Phi_n ( b \cdot \zeta)  \label{MapKakan1}
\end{eqnarray}
for all $b \in B$ and $\zeta \in \Siegel$.
\end{Prop}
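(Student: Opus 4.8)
The plan is to obtain the identity purely formally, by composing the two equivariance relations already in hand: relation (\ref{Sayou1}) for the embedding $\Phi : \Siegel \to \mathcal{D}_N$ and relation (\ref{Sayou2}) for the truncation map $\pi_n : \mathcal{D}_N \to \mathcal{D}_n$, together with the very definitions $\Phi_n = \pi_n \circ \Phi$ and $\phi_n = \rho_n \circ \phi$. No new construction is needed; the proof is a diagram chase.

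Concretely, I would fix $b \in B$ and $\zeta \in \Siegel$ and compute $\Phi_n(b \cdot \zeta) = \pi_n\bigl(\Phi(b \cdot \zeta)\bigr)$. Applying (\ref{Sayou1}) inside gives $\Phi(b \cdot \zeta) = \phi(b) \cdot \Phi(\zeta)$, hence $\Phi_n(b \cdot \zeta) = \pi_n\bigl(\phi(b) \cdot \Phi(\zeta)\bigr)$. Now invoke (\ref{Sayou2}) with $\beta := \phi(b)$ — which lies in $B_N$ because $\phi : B \to B_N$ is a group homomorphism — and $Z := \Phi(\zeta) \in \mathcal{D}_N$; this yields $\pi_n\bigl(\phi(b) \cdot \Phi(\zeta)\bigr) = \rho_n\bigl(\phi(b)\bigr) \cdot \pi_n\bigl(\Phi(\zeta)\bigr)$. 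Since $\rho_n(\phi(b)) = \phi_n(b)$ and $\pi_n(\Phi(\zeta)) = \Phi_n(\zeta)$ by definition, the right-hand side is $\phi_n(b) \cdot \Phi_n(\zeta)$, which is precisely (\ref{MapKakan1}).

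There is essentially no obstacle here. The only points worth a sentence of verification are that $\phi(b)$ really belongs to $B_N$, so that (\ref{Sayou2}) is applicable to it — this is part of the definition of $\phi$ recalled from \cite[P.~601]{Ishi2006} — and that $\rho_n$ is a genuine group homomorphism, so that $\phi_n = \rho_n \circ \phi$ is well defined (this was already checked just above the statement, where $\rho_n$ is shown to be a homomorphism and (\ref{Sayou2}) is derived from (\ref{eqn:B-action})). If any step demands attention at all, it is only the bookkeeping with the block decompositions of the matrices defining $\phi$ and $\rho_n$, but this is subsumed in the cited facts and requires no computation beyond what the text already contains.
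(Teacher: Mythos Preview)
Your argument is correct and is exactly the approach the paper takes: the proposition is stated as an immediate consequence of (\ref{Sayou1}) and (\ref{Sayou2}) together with the definitions $\Phi_n = \pi_n \circ \Phi$ and $\phi_n = \rho_n \circ \phi$, with no further proof given. Your diagram chase simply spells this out.
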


Using the group equivariance of $\Phi_n$,
 we shall show that the map $\Phi_n$ is Lipschitz continuous with respect to the Bergman distances
 on $\Siegel$ and $\mathcal{D}_n$.

\begin{Prop}\  \label{BergmanMet}
There exists $M_n >0$ such that 
\begin{eqnarray*}
 \beta_{\mathcal{D}_n}(\Phi_n(\zeta), \Phi_n(\eta))  \leq  M_n \, \beta_{\Siegel}(\zeta,\eta) 
\end{eqnarray*}
for any $\zeta,\,\eta \in \Siegel$. 
\end{Prop}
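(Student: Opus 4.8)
The plan is to reduce the global Lipschitz bound to a single pointwise estimate on the differential of $\Phi_n$, exploiting the $B$-equivariance established in Proposition~\ref{MapKakan} together with the invariance of the Bergman metric under biholomorphisms. Both $\Siegel$ and $\mathcal{D}_n$ are biholomorphic to bounded domains (the latter via the Cayley transform $\mathcal{C}_n$ onto $\mathcal{U}_n$), so each carries a genuine Bergman metric --- a real-analytic K\"ahler metric, which we denote by $g_{\Siegel}$ and $g_{\mathcal{D}_n}$ --- and by definition the Bergman distances $\beta_{\Siegel}$, $\beta_{\mathcal{D}_n}$ are the length distances of these metrics, i.e.\ infima of lengths of piecewise $C^1$ curves. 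Hence it suffices to exhibit a constant $M_n>0$ with
$$
\bigl\Vert d\Phi_n|_{\zeta}(v) \bigr\Vert_{g_{\mathcal{D}_n}} \;\le\; M_n \,\bigl\Vert v \bigr\Vert_{g_{\Siegel}}
\qquad \bigl( \zeta \in \Siegel,\ v \in T_{\zeta}\Siegel \bigr).
$$
Indeed, for any piecewise $C^1$ curve $\gamma$ joining $\zeta$ to $\eta$ in $\Siegel$, the curve $\Phi_n \circ \gamma$ then has $g_{\mathcal{D}_n}$-length at most $M_n$ times the $g_{\Siegel}$-length of $\gamma$; passing to the infimum over $\gamma$ yields $\beta_{\mathcal{D}_n}(\Phi_n(\zeta), \Phi_n(\eta)) \le M_n\,\beta_{\Siegel}(\zeta, \eta)$.

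For the pointwise estimate, fix the base point $\zeta_0 := (iI_{\nu}, 0) \in \Siegel$ (so that $\mathrm{Im}\,(iI_\nu) - F(0,0) = I_\nu \in \Omega_{\mathcal{V}}$). Since $B$ acts on $\Siegel$ simply transitively, every $\zeta \in \Siegel$ equals $b\cdot\zeta_0$ for some $b \in B$; write $L_b$ and $\widetilde{L}_b$ for the biholomorphisms $\zeta' \mapsto b\cdot\zeta'$ of $\Siegel$ and $z \mapsto \phi_n(b)\cdot z$ of $\mathcal{D}_n$ respectively (recall $\phi_n(b) \in B_n$ acts on $\mathcal{D}_n$). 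Differentiating the equivariance relation $\Phi_n \circ L_b = \widetilde{L}_b \circ \Phi_n$ of Proposition~\ref{MapKakan} at $\zeta_0$ gives
$$
d\Phi_n|_{\zeta} \;=\; \bigl( d\widetilde{L}_b|_{\Phi_n(\zeta_0)} \bigr) \circ d\Phi_n|_{\zeta_0} \circ \bigl( dL_b|_{\zeta_0} \bigr)^{-1}.
$$
Now $dL_b|_{\zeta_0}\colon T_{\zeta_0}\Siegel \to T_{\zeta}\Siegel$ and $d\widetilde{L}_b|_{\Phi_n(\zeta_0)}\colon T_{\Phi_n(\zeta_0)}\mathcal{D}_n \to T_{\Phi_n(\zeta)}\mathcal{D}_n$ are linear isometries for $g_{\Siegel}$ and $g_{\mathcal{D}_n}$, because $L_b$ and $\widetilde{L}_b$ are biholomorphisms and the Bergman metric is a biholomorphic invariant (this in turn comes from the transformation law of the Bergman kernel). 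Therefore the operator norm of $d\Phi_n|_{\zeta}$, with the source normed by $g_{\Siegel}$ and the target by $g_{\mathcal{D}_n}$, is independent of $\zeta$ and equals that of $d\Phi_n|_{\zeta_0}$, which is the norm of a fixed linear map between finite-dimensional Hermitian spaces and hence finite. Take $M_n$ to be this common value; the first paragraph then concludes the proof.

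I do not anticipate a genuine obstacle: once Proposition~\ref{MapKakan} is available the estimate is essentially automatic, the entire content being the homogeneity of $\Siegel$ and the biholomorphic invariance of the Bergman metric. The only items deserving an explicit line are (i) the identification of $\beta_{\Siegel}$ and $\beta_{\mathcal{D}_n}$ with the length metrics of $g_{\Siegel}$ and $g_{\mathcal{D}_n}$ --- standard for domains biholomorphic to bounded ones, covering both $\Siegel$ and $\mathcal{D}_n$ --- and (ii) the fact that $dL_b$ and $d\widetilde{L}_b$ preserve these metrics. If one prefers to avoid curve-integration, the same computation shows that $\Phi_n^{*}g_{\mathcal{D}_n}$ is a $B$-invariant positive semidefinite Hermitian tensor on $\Siegel$, hence dominated by a constant multiple of the $B$-invariant metric $g_{\Siegel}$ by homogeneity, which gives the Lipschitz bound for the associated distances directly.
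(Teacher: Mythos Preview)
Your proof is correct and follows essentially the same route as the paper: reduce the distance inequality to a pointwise bound on the differential $d\Phi_n$, establish that bound at the base point $p_0=(iI_\nu,0)$ by comparing a positive-definite with a positive-semidefinite Hermitian form, and then propagate it to every $\zeta\in\Siegel$ via the $B$-equivariance of $\Phi_n$ (Proposition~\ref{MapKakan}) together with the biholomorphic invariance of the Bergman metric. Your presentation is a bit more explicit about the curve-length reduction and the operator-norm formulation, but the substance is identical.
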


\begin{proof}
Let $\Bmet{\mathcal{D}_n}$ (resp. $\Bmet{\Siegel}$) be the Bergman metric on $\mathcal{D}_n$ (resp. $\Siegel$). 
It suffices to prove 
\begin{eqnarray} 
 \Bmet{\mathcal{D}_n} (\Phi_n(\zeta);J(\Phi_n,\zeta)X) \leq M_n \, \Bmet{\Siegel} (\zeta; X) \label{Berg.Metric1}
\end{eqnarray}
for all $\zeta \in \Siegel$ and $X \in \C^{\dimD}$, 
 where $\dimD := \dim \Siegel$. 

Since Hermitian forms $(\Bmet{\Siegel} )_{p_0}$
 and $(\Phi_m^{\ast}\Bmet{\mathcal{D}_m} )_{p_0}$ 
 on $\C^{\dimD}$
 are
 positive definite and positive semi-definite respectively, 
 we can take $M_n$ for which
\begin{eqnarray} 
(\Phi_n^{\ast}\Bmet{\mathcal{D}_n} )_{p_0} \leq M_n (\Bmet{\Siegel} )_{p_0} .  \label{Itten}
\end{eqnarray}
Using homogeneity, we will prove $(\ref{Berg.Metric1})$ holds for all $\zeta \in \Siegel$ and $X \in \C^{\dimD}$. 
Let us take $b \in B$ such that $ b \cdot \zeta = p_0$.
Then, the right hand side of $(\ref{Berg.Metric1})$ is written as 
\begin{eqnarray} 
  \Bmet{\mathcal{D}_n} (\Phi_n(b \cdot p_0)\,;J(\Phi_n, b \cdot p_0)X)  . \label{phiD} 
\end{eqnarray}
Substituting $\zeta = p_0$ in $(\ref{MapKakan1})$, we have 
\begin{eqnarray*}
 \Phi_n ( b \cdot p_0)  = \phi_n(b) \cdot  \Phi_n (p_0)  .
\end{eqnarray*}
Furthermore, differentiating $(\ref{MapKakan1})$ at $\zeta = p_0$, we obtain 
\begin{eqnarray*}
J( \Phi_n , b \cdot p_0)  \, J (b , p_0) 
 =   J( \phi_n(b) , \Phi_m (p_0)) \, \, J (\Phi_n , p_0) . 
\end{eqnarray*}
Therefore, $(\ref{phiD})$ is equal to 
\begin{eqnarray*} 
   \Bmet{\mathcal{D}_n} \left( \phi_n(b) \cdot  \Phi_n (p_0)  \,; J( \phi_n(b) , \Phi_n (p_0)) \,  J (\Phi_n , p_0) J (b , p_0)^{-1} X \right)  ,
\end{eqnarray*}
which is equal to 
\begin{eqnarray*} 
 \Bmet{\mathcal{D}_n} \left(  \Phi_n (p_0)  \,;J (\Phi_n , p_0) J (b , p_0)^{-1} X \right) 
\end{eqnarray*}
because $\Bmet{\mathcal{D}_n}$ is invariant under the holomorphic automorphism $\phi_n(b)$. 
By $(\ref{Itten})$,  
\begin{eqnarray*} 
 \Bmet{\mathcal{D}_n} \left(  \Phi_n (p_0)  \,;J (\Phi_n , p_0) J (b , p_0)^{-1} X \right) 
     \leq  M_n \Bmet{\Siegel} \left( p_0  \,;J (b , p_0)^{-1} X \right) 
\end{eqnarray*}
and the right hand side is equal to 
\begin{eqnarray*} 
   M_n \Bmet{\Siegel} \left( b \cdot  p_0  \,;X \right)   =   M_n \Bmet{\Siegel} \left(\zeta  \,;X \right) , 
\end{eqnarray*} 
because $b \in B$ is a biholomorphic map on $\Siegel$. 
Therefore, $(\ref{Berg.Metric1})$ is verified, whence Proposition $\ref{BergmanMet}$ follows. 
\end{proof}

\section{Minor functions}

\begin{Def} 
We set $\mu_1 := 1$ and $\mu_j := \nu_1 + \nu_2 + \cdots + \nu_{j-1} +1$ for $2 \leq j \leq r$. 
For $Z \in \Omega_{\mathcal{V}} + i \mathcal{V}\, (\subset \mathcal{V}_{\C})$,
 we define $\del_j(Z)\,\,\,(j=1, \dots, r)$ by
\begin{eqnarray*}
 \del_j(Z) := \frac{\det Z^{[\mu_{j}]} }{\det Z^{[\mu_j -1]}},
\end{eqnarray*}
 where we interpret $\det Z^{[0]} = 1$. 
We also define $Q^{\UDL{s}} (Z)$ 
 for $\UDL{s} := (s_1,...,s_r) \in \R^{r}$ by
\begin{eqnarray*}
  Q^{\UDL{s}} (Z) :=  \del_1 (Z)^{s_1} \cdots  \del_r (Z)^{s_r}.
\end{eqnarray*}
\end{Def}

The functions $\del_j(Z)$ and $Q^{\UDL{s}} (Z)$ are denoted by $\chi_j(Z)$ and $Z^s$ respectively in \cite{Gin}. 
If $\Siegel$ is a symmetric Siegel domain, then $Q^{\UDL{s}}$ coincides with 
the generalized power function $\Delta_{\UDL{s}}$ in \cite[P.122]{F-K}. 

\begin{Ex}
Let $\mathcal{V}$ be the set of $4 \times 4$ symmetric matrices with real entries of the form 
\begin{eqnarray*}
X =  \left(
\begin{matrix}
x_1  &  0   &  x_4 &  0   \\
 0    & x_1 &   0   & x_5 \\
x_4  &  0   &  x_2 &  0   \\
  0   & x_5 &   0   & x_3 \\
\end{matrix}
\right).
\end{eqnarray*}
In this case,
 $\nu_1 = 2,\,\nu_2 = \nu_3 = 1,\,
 \mathcal{V}_{21} = \{\begin{pmatrix} x_4 & 0 \end{pmatrix} \,| \,x_4 \in \R \},
 \mathcal{V}_{31} = \{\begin{pmatrix} 0 & x_5 \end{pmatrix} \,| \,x_5 \in \R \}$
 and $\mathcal{V}_{32} = \{0\}$. 
The cone $\Omega_{\mathcal{V}}$ is nothing but the Vinberg cone
 (\cite{B-N}, \cite{Vin}).
Let $\mathcal{W} = \{ 0 \}$ and $F = 0$. 
Then $\Siegel$ is the tube domain $\mathcal{V} + i \Omega_{\mathcal{V}}$
 and
 we obtain 
\begin{eqnarray*}
\del_1(X) &=&   x_1  , \\
\del_2(X) &=&  \frac{x_1(x_1 x_2 - x_4^2)}{x_1^2} =  x_2 - \frac{x_4^2}{x_1}  ,  \\
\del_3(X) &=&  \frac{\det X}{x_1(x_1 x_2 - x_4^2)} = x_3 - \frac{x_5^2}{x_1}  .
\end{eqnarray*}
These functions are considered in \cite{B-N}. 
\end{Ex}

Take any $X \in \mathcal{V}$. 
There exists a unique lower triangular matrix $T \in H$ such that $T \,^tT = X$. 
Then we have
\begin{eqnarray}
\det X^{[m]} 
  = \det (T \,^tT)^{[m]} = (\det T^{[m]})^2   \label{det.}
\end{eqnarray}
for any $1 \leq m \leq \nu$. 
Since $T$ is a lower triangular matrix, we can easily calculate the right hand side of $(\ref{det.})$. We have 
\begin{eqnarray*}
\det X^{[\mu_k]} 
  &=& {t_{11}}^{2\nu_1} {t_{22}}^{2\nu_2} \cdots {t_{k-1,k-1}}^{2\nu_{k-1}} t_{kk}^2 , \\
\det X^{[\mu_k-1]} 
  &=& {t_{11}}^{2\nu_1} {t_{22}}^{2\nu_2} \cdots {t_{k-1,k-1}}^{2\nu_{k-1}}   .  \label{DetNoHyouji1}
\end{eqnarray*}
Therefore, we obtain $\del_k(X)    =  {t_{kk}}^2 $. Hence we have 
\begin{eqnarray}
\det X^{[\mu_k]} =  {\del_1(X)}^{\nu_1} {\del_2(X)}^{\nu_2} \cdots {\del_{k-1}(X)}^{\nu_{k-1}} {\del_k(X)}.   \label{DetNoHyouji3}
\end{eqnarray}
Let $\det^{[m]}$ be the polynomial function on $\mathcal{V}_{\C}$ defined by
 $\det^{[m]} Z := \det Z^{[m]}\,\,\,(Z \in \mathcal{V}_{\C})$.
\begin{Lem}\ \label{DelToDet--jyunbi}  
Let $2 \leq k \leq r$. Then, one has
\begin{eqnarray}
 \del_k 
= ( {\rm \det}^{[\mu_1]} )^{c_{k1}}  \, ( {\rm \det}^{[\mu_2]} )^{c_{k2}} \cdots ( {\rm \det}^{[\mu_{k-1}]} )^{c_{k,k-1}} \, {\rm \det}^{[\mu_k]} , \label{Cij}
\end{eqnarray}
where $c_{ki}= - \nu_k \prod_{i<p<k} (1-\nu_p)$ for $i=1, \dots, k-1$. 
\end{Lem}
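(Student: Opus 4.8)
The plan is to prove $(\ref{Cij})$ by induction on $k$, at each stage expressing the lower minor functions $\del_1,\dots,\del_{k-1}$ in terms of the polynomials $\det^{[\mu_i]}$ and feeding the result back into the definition of $\del_k$.

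The starting point is the identity $\det X^{[\mu_k-1]} = \del_1(X)^{\nu_1}\cdots\del_{k-1}(X)^{\nu_{k-1}}$, which for $X\in\Omega_{\mathcal{V}}$ is exactly the computation preceding $(\ref{DetNoHyouji3})$. Both sides are holomorphic on the connected tube domain $\Omega_{\mathcal{V}}+i\mathcal{V}$ --- for such $Z$ the real part of $Z^{[m]}$ is positive definite, hence $\det Z^{[m]}\neq 0$ and every $\del_k$ is genuinely holomorphic there --- and they agree on the totally real submanifold $\Omega_{\mathcal{V}}$, so they agree on all of $\Omega_{\mathcal{V}}+i\mathcal{V}$ by the identity theorem. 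Dividing $\det^{[\mu_k]}$ by this product and invoking the definition of $\del_k$ yields the recursion
\begin{equation*}
 \del_k = \det^{[\mu_k]}\cdot\prod_{j=1}^{k-1}\del_j^{-\nu_j}.
\end{equation*}

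I would then run the induction. For $k=2$ the recursion reads $\del_2 = \det^{[\mu_2]}(\det^{[\mu_1]})^{-\nu_1}$ since $\del_1 = \det^{[\mu_1]}$, which is $(\ref{Cij})$ in that case. For the inductive step, assuming $(\ref{Cij})$ below $k$, I substitute $\del_j = \prod_{i=1}^{j}(\det^{[\mu_i]})^{c_{ji}}$ (with the convention $c_{jj}=1$) into the recursion and collect exponents. Since the $\nu_p$ --- and hence every exponent that appears --- are integers, there is no branch ambiguity, and the resulting product of powers of the nowhere-vanishing polynomials $\det^{[\mu_i]}$ is unambiguously defined. This gives $(\ref{Cij})$ for $k$, the exponents being governed by $c_{kk}=1$ and $c_{ki} = -\sum_{l=i}^{k-1}\nu_l\,c_{li}$ for $1\le i\le k-1$.

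The remaining task is to solve this linear recursion, which I would carry out by a second, parallel induction on $k$. The one nonformal input is the telescoping identity
\begin{equation*}
 \sum_{l=i+1}^{k-1}\nu_l\prod_{i<p<l}(1-\nu_p) = 1-\prod_{i<p<k}(1-\nu_p),
\end{equation*}
which follows from $\nu_l\prod_{i<p<l}(1-\nu_p) = \prod_{i<p<l}(1-\nu_p)-\prod_{i<p\le l}(1-\nu_p)$. Feeding the inductive hypothesis for the $c_{li}$ with $l<k$ into $c_{ki} = -\nu_i-\sum_{l=i+1}^{k-1}\nu_l c_{li}$ and applying this identity collapses the right-hand side to the coefficient $c_{ki}$ displayed in $(\ref{Cij})$, completing the proof. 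All of this is essentially bookkeeping; the one step that is not purely formal is the holomorphic continuation from $\Omega_{\mathcal{V}}$ to $\Omega_{\mathcal{V}}+i\mathcal{V}$ in the second paragraph, so that is where I expect to need to be careful.
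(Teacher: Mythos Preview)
Your proof is correct and follows the same overall plan as the paper: derive from $(\ref{DetNoHyouji3})$ a recursion expressing $\del_k$ in terms of lower $\del_j$'s and the polynomials $\det^{[\mu_i]}$, then induct on $k$. The difference is only in which recursion is used. You take the full relation $\del_k=\det^{[\mu_k]}\prod_{j<k}\del_j^{-\nu_j}$, which forces you to substitute all previous inductive hypotheses and then resolve the resulting sum $c_{ki}=-\sum_{l=i}^{k-1}\nu_l c_{li}$ by a telescoping identity. The paper instead extracts from $(\ref{DetNoHyouji3})$ the two–term recursion
\[
\del_{i+1}=\del_i^{\,1-\nu_i}\,\frac{\det^{[\mu_{i+1}]}}{\det^{[\mu_i]}},
\]
so that only the single hypothesis for $\del_k$ is needed and the coefficient recursion collapses to the multiplicative rule $c_{k+1,i}=(1-\nu_k)c_{ki}$, from which the product formula is immediate without any telescoping. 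Your route is a little longer but equally valid; your explicit remark on extending the identity from $\Omega_{\mathcal{V}}$ to the tube $\Omega_{\mathcal{V}}+i\mathcal{V}$ via the identity theorem is a point the paper leaves implicit.
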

\begin{proof}

For $1 \leq i <r$, we obtain 
\begin{eqnarray*}
\frac{{\rm det}^{[\mu_{i+1}]}}{{\rm det}^{[\mu_i]}} = \frac{{\del_{i}}^{\nu_{i}} \del_{i+1}}{\del_i}
\end{eqnarray*}
from $(\ref{DetNoHyouji3})$. Therefore, we have
\begin{eqnarray}
\del_{i+1} = {\del_i}^{1-\nu_i} \frac{{\rm det}^{[\mu_{i+1}]}}{{\rm det}^{[\mu_i]}}.    \label{Zen1}
\end{eqnarray}
In particular, 
\begin{eqnarray*}
 \del_2 = {\del_1}^{1-\nu_1} \frac{{\rm det}^{[\mu_{2}]}}{{\rm det}^{[\mu_1]}}  
        = ({\rm det}^{[\mu_{1}]})^{-\nu_1} {\rm det}^{[\mu_{2}]}  .
\end{eqnarray*}
Thus, the formula holds for $k=2$ with $c_{21} = - \nu_1$. 
Assume that the statement holds for $l=j$. 
Substituting $(\ref{Cij})$ to $(\ref{Zen1})$, we have
\begin{eqnarray*}
\del_{j+1} &=& {\rm det}^{[\mu_{j+1}]} \left\{ {\rm det}^{[\mu_j]} \prod_{ i <j} ({\rm det}^{[\mu_i]})^{c_{ji}} \right\}^{(1-\nu_j)} ({\rm det}^{[\mu_j]})^{-1}  \\
 &=& {\rm det}^{[\mu_{j+1}]} ({\rm det}^{[\mu_j]})^{-\nu_j} \prod_{ i <j} ({\rm det}^{[\mu_i]})^{(1-\nu_j)c_{ji}} .
\end{eqnarray*}
Therefore, if we put 
\begin{eqnarray*}
c_{j+1,i}    &=& (1-\nu_j)c_{ji} = - \nu_j \prod_{i<p<j+1} (1-\nu_p)   \ \ \ {\rm for \ } i=1, \dots, j ,  \\
c_{j+1,j}  &=& - \nu_j ,
\end{eqnarray*}
we have $(\ref{Cij})$ for $k=j+1$. 
\end{proof}
Let $\UDL{d} := (d_1, \dots, d_r)$ and $\UDL{b} := (b_1, \dots, b_r)$ be $r$-tuples of integers defined by
$$
 d_k := 2 + \sum_{i<k} \dim \mathcal{V}_{ki} + \sum_{l>k} \dim \mathcal{V}_{lk}, \quad
 b_k := \dim \mathcal{W}_k \qquad (k=1, \dots, r).
$$
Then the Bergman kernel of the homogeneous Siegel domain $\Siegel$ is given by
\begin{eqnarray*}
K_{\Siegel}(\zeta,\zeta^{\prime})  = C \, Q^{-(2\UDL{d} + \UDL{b})} \left( \frac{Z-\overline{Z^{\prime}}}{2i} - F(U,U^{\prime}) \right) 
\end{eqnarray*}
for $\zeta := (Z,U), \zeta^{\prime} := (Z^{\prime},U^{\prime}) \in \Siegel$,
 where $C$ is a constant depending only on the normalization of the Lebesgue measure on $\mathcal{V}$ 
 (see \cite[Proposition 5.1]{Gin}). 
This formula together with Lemma $\ref{DelToDet--jyunbi}$ tells us the following.

\begin{Lem}\ \label{DelToDet}
Let $c_{kj}\,\,\,(1 \leq j < k  \leq r)$ be the integer defined as in Lemma $\ref{DelToDet--jyunbi}$, and $c_{jj}=1$ for $1 \leq j \leq r$. 
Setting
\begin{eqnarray*}
 s_j :=  \sum_{k \geq j} (2d_k+b_k) c_{kj} \qquad (j=1, \dots, r),
\end{eqnarray*}
 one has
\begin{eqnarray*}
  K_{\Siegel}(\zeta,\zeta^{\prime}) 
  =  C \, \prod_{j=1}^r   \left\{ {\rm \det}^{[\mu_j]}  \left( \frac{Z-\overline{Z^{\prime}}}{2i} - F(U, U^{\prime}) \right) \right\}^{-s_j}.
\end{eqnarray*}
\end{Lem}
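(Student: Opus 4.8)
The plan is to combine the Gindikin formula for $K_{\Siegel}$ quoted just above with the change-of-basis identity from Lemma \ref{DelToDet--jyunbi}, treating everything at the level of the generalized power function $Q^{\UDL{s}}$ evaluated at the point $\zeta \diamond \zeta' := \frac{Z - \overline{Z'}}{2i} - F(U,U')$, which lies in $\Omega_{\mathcal{V}} + i\mathcal{V}$ whenever $\zeta,\zeta' \in \Siegel$, so that all the power functions $\del_j$ and $\det^{[\mu_j]}$ are defined there. The starting point is
\[
 K_{\Siegel}(\zeta,\zeta') = C\, Q^{-(2\UDL{d}+\UDL{b})}(\zeta \diamond \zeta') = C \prod_{k=1}^r \del_k(\zeta\diamond\zeta')^{-(2d_k+b_k)},
\]
so the task reduces to re-expressing $\prod_{k=1}^r \del_k^{-(2d_k+b_k)}$ as a product of powers of the minors $\det^{[\mu_j]}$.

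First I would record that Lemma \ref{DelToDet--jyunbi}, together with the trivial case $k=1$ (where $\del_1 = \det^{[\mu_1]}$, consistent with setting $c_{11}=1$ and the empty product convention), gives for every $k=1,\dots,r$ the identity $\del_k = \prod_{j=1}^{k} (\det^{[\mu_j]})^{c_{kj}}$ as functions on $\Omega_{\mathcal{V}} + i\mathcal{V}$, with $c_{kj}$ as in that lemma for $j<k$ and $c_{kk}=1$. Substituting this into the displayed product yields
\[
 \prod_{k=1}^r \del_k(\zeta\diamond\zeta')^{-(2d_k+b_k)}
 = \prod_{k=1}^r \prod_{j=1}^{k} \bigl(\det^{[\mu_j]}(\zeta\diamond\zeta')\bigr)^{-(2d_k+b_k)c_{kj}}.
\]
Then I would interchange the order of the double product, collecting for each fixed $j$ the total exponent of $\det^{[\mu_j]}$: the pairs $(k,j)$ with $j$ fixed are exactly those with $k \geq j$, so the exponent is $-\sum_{k\geq j}(2d_k+b_k)c_{kj} = -s_j$ by the definition of $s_j$ in the statement. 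This gives
\[
 K_{\Siegel}(\zeta,\zeta') = C \prod_{j=1}^r \bigl(\det^{[\mu_j]}(\zeta\diamond\zeta')\bigr)^{-s_j},
\]
which is the claimed formula once one notes $\det^{[\mu_j]}(\zeta\diamond\zeta') = \det\bigl(\frac{Z-\overline{Z'}}{2i} - F(U,U')\bigr)^{[\mu_j]}$.

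There is no deep obstacle here; the lemma is essentially a bookkeeping consequence of the preceding material. The one point deserving care is that the manipulation of fractional/real powers of the holomorphic functions $\det^{[\mu_j]}$ is legitimate: the functions $\del_j$ and $\det^{[\mu_j]}$ are nonvanishing and positive on the tube over the cone, extend holomorphically to $\Omega_{\mathcal{V}}+i\mathcal{V}$, and one fixes branches of their powers by analytic continuation from the cone itself (where all values are positive reals), so the multiplicativity used in substituting Lemma \ref{DelToDet--jyunbi} and in reindexing the product is consistent with these branch choices — this is exactly the convention under which Gindikin's formula is stated. A secondary check is that $s_j$ is indeed an integer: since each $c_{kj}$ is an integer by Lemma \ref{DelToDet--jyunbi} and each $2d_k+b_k$ is an integer by the definitions of $\UDL d$ and $\UDL b$, the finite sum $s_j$ is an integer, which is what makes the exponents in the final product honest integer powers and lets the same expression propagate to the minimal bounded realization in Section 5.
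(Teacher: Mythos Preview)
Your proposal is correct and follows exactly the route the paper indicates: the paper does not spell out a separate proof of this lemma but simply remarks that Gindikin's formula together with Lemma~\ref{DelToDet--jyunbi} yields it, and your substitution of $\del_k = \prod_{j\le k}(\det^{[\mu_j]})^{c_{kj}}$ into $Q^{-(2\UDL{d}+\UDL{b})}$ followed by reindexing the double product is precisely that deduction made explicit. Your added remarks on branch choices and integrality of $s_j$ go slightly beyond what the paper bothers to say, but are consistent with it.
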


\section{Bergman kernel of the minimal bounded homogeneous domains}
By the transformation formula of the Bergman kernel, we have the following general formula. 
\begin{Lem}\ \label{Domchange} 
Let $D_1$ and $D_2$ be complex domains and $\alpha$ a biholomorphic map from $D_1$ onto $D_2$. 
Then, one has 
\begin{eqnarray}
\frac{K_{D_2} (\alpha(\zeta_1),\alpha(\zeta_2))  \, K_{D_2}(\alpha(\zeta_3),\alpha(\zeta_4))}
{K_{D_2} (\alpha(\zeta_1),\alpha(\zeta_4))  \, K_{D_2}(\alpha(\zeta_3),\alpha(\zeta_2))}
 = \frac{K_{D_1} (\zeta_1,\zeta_2) \, K_{D_1}(\zeta_3,\zeta_4)}
 {K_{D_1} (\zeta_1,\zeta_4)  \, K_{D_1}(\zeta_3,\zeta_2)}
\end{eqnarray}
for $\zeta_1,\zeta_2,\zeta_3,\zeta_4 \in D_1$. 
\end{Lem}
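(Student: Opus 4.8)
The plan is to derive this as a direct consequence of the Bergman kernel transformation law. Recall that for a biholomorphism $\alpha : D_1 \to D_2$, one has
\begin{eqnarray*}
K_{D_1}(\zeta, \eta) = \overline{\det J(\alpha, \eta)} \, K_{D_2}(\alpha(\zeta), \alpha(\eta)) \, \det J(\alpha, \zeta)
\end{eqnarray*}
for all $\zeta, \eta \in D_1$, where $J(\alpha, \cdot)$ denotes the holomorphic Jacobian matrix of $\alpha$.

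First I would apply this identity to each of the four pairs appearing in the statement. Writing $j(\zeta) := \det J(\alpha, \zeta)$ for brevity, I get
$K_{D_1}(\zeta_1, \zeta_2) = \overline{j(\zeta_2)}\, K_{D_2}(\alpha(\zeta_1), \alpha(\zeta_2))\, j(\zeta_1)$, and similarly for the pairs $(\zeta_3,\zeta_4)$, $(\zeta_1,\zeta_4)$, and $(\zeta_3,\zeta_2)$. Then I form the cross-ratio-type quotient
\begin{eqnarray*}
\frac{K_{D_1}(\zeta_1,\zeta_2)\, K_{D_1}(\zeta_3,\zeta_4)}{K_{D_1}(\zeta_1,\zeta_4)\, K_{D_1}(\zeta_3,\zeta_2)}.
\end{eqnarray*}
In the numerator the Jacobian factors contribute $\overline{j(\zeta_2)}\, j(\zeta_1) \cdot \overline{j(\zeta_4)}\, j(\zeta_3)$, while in the denominator they contribute $\overline{j(\zeta_4)}\, j(\zeta_1) \cdot \overline{j(\zeta_2)}\, j(\zeta_3)$; these are manifestly equal, so all Jacobian factors cancel, leaving exactly the quotient of the $K_{D_2}$ values evaluated at the images $\alpha(\zeta_i)$. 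That is precisely the asserted identity.

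There is essentially no obstacle here: the only point requiring a word of care is that every Bergman kernel value appearing in a denominator must be nonzero, so that the quotients are well defined — but this is part of the hypothesis, since the statement presupposes the left-hand side exists, and moreover the transformation law shows $K_{D_1}(\zeta_1,\zeta_4) \ne 0$ iff $K_{D_2}(\alpha(\zeta_1),\alpha(\zeta_4)) \ne 0$ because the Jacobian of a biholomorphism never vanishes. Thus the two sides are simultaneously defined, and the computation above gives the equality.
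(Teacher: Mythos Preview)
Your proof is correct and is exactly the argument the paper has in mind: the paper does not give a detailed proof, stating only that the lemma follows from the transformation formula of the Bergman kernel, and your write-up just makes explicit the cancellation of the Jacobian factors in the cross-ratio.
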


Let $\mathcal{U}$ be a minimal bounded homogeneous domain with a center $\cent$.
By \cite{VGS}, $\mathcal{U}$ is biholomorphic to a homogeneous Siegel domain 
 $D \subset \mathcal{V}_{\C} \times \mathcal{W}$.
We set $p_0 := (i I, 0) \in D$ and take
 a biholomorphic map $\sigma$ from $D$ onto $\mathcal{U}$
 such that $\sigma(p_0) = t$. 
From Lemma $\ref{Domchange}$, we have the following lemma, which is substantial
 in the present work.
\begin{Lem}\ \label{HyoukaLem1} 
For any $\zeta,\zeta^{\prime} \in \Siegel$, one has 
\begin{eqnarray*}
{\rm Vol \, }(\mathcal{U}) \, K_{\mathcal{U}}(\sigma(\zeta), \sigma(\zeta^{\prime})) 
 =  \frac{K_{\Siegel} (\zeta,\zeta^{\prime}) K_{\Siegel}(p_0,p_0) }
 {K_{\Siegel}(\zeta,p_0) K_{\Siegel} (p_0,\zeta^{\prime})}. 
\end{eqnarray*}
\end{Lem}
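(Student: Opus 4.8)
The plan is to derive Lemma~\ref{HyoukaLem1} as a direct specialization of the cross-ratio identity in Lemma~\ref{Domchange}. First I would apply Lemma~\ref{Domchange} with $D_1 = \Siegel$, $D_2 = \mathcal{U}$, and $\alpha = \sigma$, choosing the four points $\zeta_1 = \zeta$, $\zeta_2 = \zeta^{\prime}$, $\zeta_3 = \zeta_4 = p_0$. This gives
\begin{eqnarray*}
\frac{K_{\mathcal{U}} (\sigma(\zeta),\sigma(\zeta^{\prime})) \, K_{\mathcal{U}}(\sigma(p_0),\sigma(p_0))}
{K_{\mathcal{U}} (\sigma(\zeta),\sigma(p_0)) \, K_{\mathcal{U}}(\sigma(p_0),\sigma(\zeta^{\prime}))}
 = \frac{K_{\Siegel} (\zeta,\zeta^{\prime}) \, K_{\Siegel}(p_0,p_0)}
 {K_{\Siegel} (\zeta,p_0) \, K_{\Siegel}(p_0,\zeta^{\prime})}.
\end{eqnarray*}
Since $\sigma(p_0) = \cent$, the left-hand side involves $K_{\mathcal{U}}(\sigma(\zeta),\cent)$, $K_{\mathcal{U}}(\cent,\cent)$ and $K_{\mathcal{U}}(\cent,\sigma(\zeta^{\prime}))$.

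The key step is then to invoke the minimality of $\mathcal{U}$. By Proposition~\ref{I-K3.8}, since $\mathcal{U}$ is a minimal domain with center $\cent$, we have $K_{\mathcal{U}}(z,\cent) = 1/{\rm Vol}(\mathcal{U})$ for every $z \in \mathcal{U}$; in particular $K_{\mathcal{U}}(\sigma(\zeta),\cent) = K_{\mathcal{U}}(\cent,\cent) = 1/{\rm Vol}(\mathcal{U})$, and by the Hermitian symmetry $K_{\mathcal{U}}(\cent,\sigma(\zeta^{\prime})) = \overline{K_{\mathcal{U}}(\sigma(\zeta^{\prime}),\cent)} = 1/{\rm Vol}(\mathcal{U})$ as well. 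Substituting these three evaluations into the left-hand side, all factors except $K_{\mathcal{U}}(\sigma(\zeta),\sigma(\zeta^{\prime}))$ collapse: the numerator becomes $K_{\mathcal{U}}(\sigma(\zeta),\sigma(\zeta^{\prime})) \cdot {\rm Vol}(\mathcal{U})^{-1}$ and the denominator becomes ${\rm Vol}(\mathcal{U})^{-2}$, so the whole left-hand side equals ${\rm Vol}(\mathcal{U})\, K_{\mathcal{U}}(\sigma(\zeta),\sigma(\zeta^{\prime}))$. Equating this with the right-hand side yields the claimed identity.

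I do not expect any serious obstacle here; the proof is essentially bookkeeping once the right four points are plugged into Lemma~\ref{Domchange}. The only points requiring a moment's care are: that $\sigma$ is genuinely a biholomorphism (so Lemma~\ref{Domchange} applies) and that $p_0 \in \Siegel$, both of which are already arranged in the setup preceding the lemma; that the Bergman kernel satisfies $K(a,b) = \overline{K(b,a)}$, which is standard; and that ${\rm Vol}(\mathcal{U})$ is finite and nonzero, which holds because $\mathcal{U}$ is a bounded domain. If one wanted to avoid appealing to Hermitian symmetry, one could instead take $\zeta_1 = \zeta$, $\zeta_3 = p_0$, $\zeta_2 = \zeta_4 = \zeta^{\prime}$ and argue symmetrically, but the version above is the cleanest. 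Thus Lemma~\ref{HyoukaLem1} follows.
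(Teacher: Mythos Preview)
Your proposal is correct and follows essentially the same route as the paper's own proof: apply Lemma~\ref{Domchange} with $\alpha = \sigma$ and the four points $\zeta, \zeta', p_0, p_0$, then use Proposition~\ref{I-K3.8} to replace every kernel value involving the center $\cent = \sigma(p_0)$ by $1/{\rm Vol}(\mathcal{U})$. The only cosmetic difference is that you make the appeal to Hermitian symmetry for $K_{\mathcal{U}}(\cent,\sigma(\zeta'))$ explicit, whereas the paper simply asserts all three values equal $1/{\rm Vol}(\mathcal{U})$.
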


\begin{proof}
By Lemma $\ref{Domchange}$, we obtain  
\begin{eqnarray}
  \frac{K_{\mathcal{U}}(\sigma(\zeta), \sigma(\zeta^{\prime})) \, K_{\mathcal{U}}(\cent, \cent)}
 { K_{\mathcal{U}}\left(\sigma(\zeta), \cent\right) \, K_{\mathcal{U}}(\cent, \sigma(\zeta^{\prime}))}
= \frac{K_{\Siegel} (\zeta,\zeta^{\prime})  \, K_{\Siegel}(p_0,p_0)}
{K_{\Siegel}(\zeta,p_0) \, K_{\Siegel} (p_0,\zeta^{\prime})}  . \label{BKkankeisiki}
\end{eqnarray}
By Proposition $\ref{I-K3.8}$, we obtain 
\begin{eqnarray*}
K_{\mathcal{U}}\left(\sigma(\zeta), \cent \right) = K_{\mathcal{U}}(\cent, \sigma(\zeta^{\prime})) = K_{\mathcal{U}}(\cent,\cent) = \frac{1}{{\rm Vol \, }(\mathcal{U})} .
\end{eqnarray*}
Therefore, the left hand side of $(\ref{BKkankeisiki})$ is equal to 
\begin{eqnarray*}
{\rm Vol \, }(\mathcal{U}) \, K_{\mathcal{U}}(\sigma(\zeta), \sigma(\zeta^{\prime})) ,
\end{eqnarray*}
and we complete the proof. 
\end{proof}

For $1 \le n \le N$, let $\theta_n$ be the composition map $\mathcal{C}_n \circ \Phi_n \circ \sigma^{-1}$
 from $\mathcal{U}$ into the Siegel disk $\mathcal{U}_n$.
Using the maps $\theta_n$, we can describe the Bergman kernel $K_{\mathcal{U}}$ as follows.

\begin{Thm}\ \label{MainResult1}
Putting $n_j = \nu_0+\nu_1+ \cdots + \nu_{j-1}+1$, one has
\begin{eqnarray*}
 K_{\mathcal{U}}(z, z^{\prime}) 
&=& \frac{1}{{\rm Vol \, }(\mathcal{U})} \prod^r_{j=1} \left\{ \det \left( I_{n_j} - \hj (z) \overline{\hj (z^{\prime})} \right) \right\}^{-s_j} \\
&=& C \prod^r_{j=1} K_{\mathcal{U}_{n_j}}(\hj(z),\hj(z^{\prime}))^{\frac{s_j}{n_j+1}}  
\end{eqnarray*}
for $z , z^{\prime} \in \mathcal{U}$, where $s_j$ are integers defined by Lemma $\ref{DelToDet}$. 
\end{Thm}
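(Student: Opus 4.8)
The strategy is to chase the factorization of $K_{\Siegel}$ from Lemma~\ref{DelToDet} through the biholomorphism $\sigma$ by means of Lemma~\ref{HyoukaLem1}, and then to identify the resulting rational expression in the minor determinants with the determinants $\det(I_{n_j} - \hj(z)\overline{\hj(z')})$ pulled back from the Siegel disks via the Cayley transform. First I would fix $z = \sigma(\zeta)$ and $z' = \sigma(\zeta')$ with $\zeta = (Z,U)$, $\zeta' = (Z',U')$ in $\Siegel$, and write $\Psi(\zeta,\zeta') := \frac{Z-\overline{Z'}}{2i} - F(U,U') \in \Omega_{\mathcal{V}} + i\mathcal{V}$ for the ``polarized'' argument appearing in the Bergman kernel; note $\Psi(p_0,p_0) = I$ and $\Psi(\zeta,p_0) = \frac{Z+iI}{2i}$, $\Psi(p_0,\zeta') = \overline{\left(\frac{Z'+iI}{2i}\right)}$ after substituting $p_0 = (iI,0)$. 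Plugging the Lemma~\ref{DelToDet} formula $K_{\Siegel}(\zeta,\zeta') = C\prod_j \{\det^{[\mu_j]}\Psi(\zeta,\zeta')\}^{-s_j}$ into Lemma~\ref{HyoukaLem1}, the constant $C$ cancels and one gets
$$
 \mathrm{Vol}(\mathcal{U})\,K_{\mathcal{U}}(z,z') = \prod_{j=1}^r \left\{ \frac{\det^{[\mu_j]}\Psi(\zeta,\zeta')}{\det^{[\mu_j]}\Psi(\zeta,p_0)\,\det^{[\mu_j]}\Psi(p_0,\zeta')} \right\}^{-s_j}.
$$

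Next I would compute each bracketed factor. The key observation is that the restriction $Z \mapsto Z^{[\mu_j]}$ sends $\mathcal{V}_{\C}$ to $\mathrm{Sym}(\mu_j,\C)$-type matrices compatibly with the block structure, and in fact $\mu_j = n_j - \nu_0$ relates the cone-side minor size to the Siegel-disk size $n_j$; more precisely $\Phi_n$ was built so that $\Phi_n(\zeta) = \pi_n(\Phi(\zeta))$ with $\Phi$ as in \eqref{MapPhi}, and one reads off from \eqref{MapPhi} that the $n_j \times n_j$ principal submatrix of $\Phi(\zeta)$ involves exactly $Z^{[\mu_j]}$ together with the relevant blocks of $U$. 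Then formula \eqref{Cayleyeq} applied in $\mathcal{D}_{n_j}$ gives
$$
 I_{n_j} - \mathcal{C}_{n_j}(\Phi_{n_j}(\zeta))\,\overline{\mathcal{C}_{n_j}(\Phi_{n_j}(\zeta'))} = \left(\tfrac{\Phi_{n_j}(\zeta)+iI}{2i}\right)^{-1}\left(\tfrac{\Phi_{n_j}(\zeta)-\overline{\Phi_{n_j}(\zeta')}}{2i}\right)\overline{\left(\tfrac{\Phi_{n_j}(\zeta')+iI}{2i}\right)^{-1}},
$$
and taking $\det$ of both sides, the right-hand determinant factors as (determinant of the middle term) divided by (the two boundary determinants). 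I would then check that
$$
 \det\left(\tfrac{\Phi_{n_j}(\zeta)-\overline{\Phi_{n_j}(\zeta')}}{2i}\right) = \det^{[\mu_j]}\Psi(\zeta,\zeta'),
$$
i.e. that the $\nu_0$-block of $\Phi$ contributes a factor $1$ (it is $iI_{\nu_0}$, independent of $\zeta$) and the remaining $\mu_j$-block reproduces $\frac{Z-\overline{Z'}}{2i} - F(U,U')$ on its $[\mu_j]$-principal part — this is the direct-computation heart of the argument, using the explicit shape of $\Phi$ in \eqref{MapPhi} and the definition $F = F_{\mathcal{V},\mathcal{W}}$. Granting this, $\det(I_{n_j}-\hj(z)\overline{\hj(z')}) = \frac{\det^{[\mu_j]}\Psi(\zeta,\zeta')}{\det^{[\mu_j]}\Psi(\zeta,p_0)\det^{[\mu_j]}\Psi(p_0,\zeta')}$ since $\hj = \mathcal{C}_{n_j}\circ\Phi_{n_j}\circ\sigma^{-1}$, and substituting into the displayed expression for $\mathrm{Vol}(\mathcal{U})K_{\mathcal{U}}$ yields the first asserted identity.

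For the second line, I would simply recall the stated Bergman kernel of the Siegel disk, $K_{\mathcal{U}_m}(Z,W) = \mathrm{Vol}(\mathcal{U}_m)^{-1}\det(I_m - Z\overline{W})^{-(m+1)}$, so that $\det(I_{n_j}-\hj(z)\overline{\hj(z')})^{-s_j} = \mathrm{Vol}(\mathcal{U}_{n_j})^{s_j/(n_j+1)} K_{\mathcal{U}_{n_j}}(\hj(z),\hj(z'))^{s_j/(n_j+1)}$, and collecting all dimensional constants into a single $C$ gives the second equality. The main obstacle I anticipate is the bookkeeping in identifying $\det\bigl(\frac{\Phi_{n_j}(\zeta)-\overline{\Phi_{n_j}(\zeta')}}{2i}\bigr)$ with $\det^{[\mu_j]}\Psi(\zeta,\zeta')$: one must be careful that the principal $n_j \times n_j$ submatrix of $\Phi(\zeta)$ (which mixes the $iI_{\nu_0}$ corner, the ${}^tU,U$ off-diagonal blocks, and $Z - \frac{i}{2}U\,{}^tU$) has determinant of its ``$\zeta - \overline{\zeta'}$ polarization'' exactly equal to the $[\mu_j]$-minor of $\frac{Z-\overline{Z'}}{2i} - F(U,U')$, which amounts to checking that the Schur complement of the $iI_{\nu_0}$-block reproduces $F_{\mathcal{V},\mathcal{W}}(U,U') = (U\,{}^t\overline{U'} + \overline{U'}\,{}^tU)/4$ correctly on the relevant blocks — essentially the same linear-algebra identity that makes $\Phi$ an equivariant embedding in the first place.
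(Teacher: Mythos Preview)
Your proposal is correct and follows essentially the same route as the paper: apply Lemma~\ref{HyoukaLem1} together with Lemma~\ref{DelToDet} to reduce $K_{\mathcal{U}}$ to a product of ratios of $\det^{[\mu_j]}$-minors, verify the block--Schur identity $\det\!\bigl(\frac{\Phi_{n_j}(\zeta)-\overline{\Phi_{n_j}(\zeta')}}{2i}\bigr) = \det^{[\mu_j]}\Psi(\zeta,\zeta')$ from the explicit form of $\Phi$ in \eqref{MapPhi}, and then use the Cayley identity \eqref{Cayleyeq} to rewrite each factor as $\det(I_{n_j}-\hj(z)\overline{\hj(z')})$. The paper carries out exactly these steps in the same order, leaving the determinant identity as ``not difficult to check'' where you (rightly) flag it as the main bookkeeping point.
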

\begin{proof}
Let $\zeta =(Z,U)$ and $\zeta^{\prime}=(Z^{\prime},U^{\prime})$ be elements $\sigma^{-1}(z) \in \Siegel$ and  $\sigma^{-1}(z') \in \Siegel$ respectively.
By Lemma $\ref{HyoukaLem1}$, we have 
\begin{align}
{} &  K_{\mathcal{U}}(z,z^{\prime}) 
= K_{\mathcal{U}}(\sigma(\zeta), \sigma(\zeta^{\prime})) \nonumber  \\
&= \frac{1}{{\rm Vol \, }(\mathcal{U})} \frac{K_{\Siegel} (\zeta,\zeta^{\prime}) K_{\Siegel}(p_0,p_0) }
 {K_{\Siegel}(\zeta,p_0) K_{\Siegel} (p_0,\zeta^{\prime})} \nonumber  \\
&= \frac{1}{{\rm Vol \, }(\mathcal{U})} \frac{Q^{-(2\UDL{d}+\UDL{b})} \left( \frac{Z-\overline{Z^{\prime}}}{2i} -F(U,U^{\prime}) \right) \, Q^{-(2\UDL{d}+\UDL{b})} \left( \frac{iI_{\nu}-\overline{iI_{\nu}}}{2i} -F(0,0) \right)}
 {Q^{-(2\UDL{d}+\UDL{b})} \left( \frac{Z-\overline{iI_{\nu}}}{2i} -F(U,0) \right) \, Q^{-(2\UDL{d}+\UDL{b})} \left( \frac{iI_{\nu}-\overline{Z^{\prime}}}{2i} -F(0,U^{\prime}) \right)} \nonumber \\
&= \frac{1}{{\rm Vol \, }(\mathcal{U})} \frac{Q^{-(2\UDL{d}+\UDL{b})} \left( \frac{Z-\overline{Z^{\prime}}}{2i} -F(U,U^{\prime}) \right) }
 {Q^{-(2\UDL{d}+\UDL{b})} \left( \frac{Z-\overline{iI_{\nu}}}{2i}  \right) \, Q^{-(2\UDL{d}+\UDL{b})} \left( \frac{iI_{\nu}-\overline{Z^{\prime}}}{2i}  \right)}  . \label{BergWoDetHe} 
\end{align}
By Lemma $\ref{DelToDet}$, 
the right hand side of $(\ref{BergWoDetHe})$ is equal to 
\begin{eqnarray*}
 \frac{1}{{\rm Vol \, }(\mathcal{U})} \prod^r_{j=1} \left\{ \frac{\det^{[\mu_j]} \left( \frac{Z-\overline{Z^{\prime}}}{2i} -F(U,U^{\prime}) \right) }
      {\det^{[\mu_j]} \left( \frac{Z-\overline{iI_{\nu}}}{2i}  \right) \, \det^{[\mu_j]} \left( \frac{iI_{\nu}-\overline{Z^{\prime}}}{2i}  \right)} \right\}^{-s_j}. 
\end{eqnarray*}
We see from (\ref{MapPhi}) that
\begin{eqnarray*}
{\rm \det} \left( \frac{Z-\overline{Z^{\prime}}}{2i} - F ( U , U^{\prime}) \right) 
   =  \det \left( \frac{\Phi(\zeta)-\overline{\Phi(\zeta^{\prime})}}{2i} \right) .
\end{eqnarray*}
Moreover, it is not difficult to check that 
\begin{eqnarray*}
{\rm \det}^{[m]} \left( \frac{Z-\overline{Z^{\prime}}}{2i} - F ( U , U^{\prime}) \right) 
   =  \det \left( \frac{\Phi_{m+\nu_0}(\zeta)-\overline{\Phi_{m+\nu_0}(\zeta^{\prime})}}{2i} \right) 
\end{eqnarray*}
for $1 \leq m \leq \nu$.
Since $n_j = \nu_0+\mu_j$ , we have
\begin{eqnarray}
&& \frac{\det^{[\mu_j]} \left( \frac{Z-\overline{Z^{\prime}}}{2i} -F(U,U^{\prime}) \right) }
     { \det^{[\mu_j]} \left( \frac{iI_{\nu}-\overline{Z^{\prime}}}{2i} \right) \, \det^{[\mu_j]} \left( \frac{Z-\overline{iI_{\nu}}}{2i} \right) }   \nonumber  \\
&=& \det \left\{ \left( \frac{\Phi_{n_j}(\zeta) - \overline{\Phi_{n_j}(p_0)}}{2i} \right)^{-1} 
\left( \frac{\Phi_{n_j}(\zeta) - \overline{\Phi_{n_j}(\zeta^{\prime})}}{2i} \right) 
\left( \frac{\Phi_{n_j}(p_0) - \overline{\Phi_{n_j}(\zeta^{\prime})}}{2i} \right)^{-1} 
\right\}  \nonumber \\
&=& \det \left\{ \left( \frac{\Phi_{n_j}(\zeta) +iI_{n_j}}{2i} \right)^{-1}  
\left( \frac{\Phi_{n_j}(\zeta) - \overline{\Phi_{n_j}(\zeta^{\prime})}}{2i} \right) 
\overline{\left( \frac{\Phi_{n_j}(\zeta^{\prime}) + iI_{n_j}  }{2i} \right)^{-1} }
\right\}  .  \label{3tunoSeki}
\end{eqnarray}
By $(\ref{Cayleyeq})$, the last term of $(\ref{3tunoSeki})$ is equal to 
\begin{eqnarray*}
\det \left( I_{n_j} - \mathcal{C}_{n_j} ( \Phi_{n_j} (\zeta)) \ \overline{\mathcal{C}_{n_j} (\Phi_{n_j} (\zeta^{\prime}))} \right) .
\end{eqnarray*}
Since $\theta_{m} = \mathcal{C}_m \circ \Phi_m \circ \sigma^{-1}$, we have 
\begin{eqnarray*}
 K_{\mathcal{U}}(z, z^{\prime}) 
= \frac{1}{{\rm Vol \, }(\mathcal{U})}  \prod^r_{j=1} \left\{ \det \left( I_{n_j} - \theta_{n_j} (z) \, \overline{\theta_{n_j} (z^{\prime})} \right) \right\}^{-s_j} .
\end{eqnarray*}
The second equality in the statement follows from the above and
 the formula (\cite{Hua})
\begin{eqnarray*} 
K_{\mathcal{U}_{n_j}}(w,w^{\prime}) = \frac{1}{{\rm Vol}(\mathcal{U}_{n_j})}
 \det \left( I_{n_j} - w \, \overline{w^{\prime}} \right)^{-(n_j+1)} \ \ \ (w,w^{\prime} \in \mathcal{U}_{n_j}).
\end{eqnarray*}
\end{proof}

\section{Estimates of the Bergman kernel of minimal bounded homogeneous domains} 
In this section, we prove Theorem $\ref{MainEst}$. 
Let $\mathcal{U}$ be a minimal bounded homogeneous domain 
 with a center $\cent$
 as in the previous section.
For each $a \in \mathcal{U}$, we take $\varphi_a$ an automorphism on $\mathcal{U}$ such that 
$\varphi_a (a) =\cent$. Since $K_{\mathcal{U}}( \cdot ,\cent)$ is a constant function, we have
\begin{eqnarray}
\frac{K_{\mathcal{U}}(z,a)}{K_{\mathcal{U}}(a,a)}  \nonumber  
   &=& \frac{K_{\mathcal{U}}(z,a) \, K_{\mathcal{U}}(a,\cent)}{K_{\mathcal{U}}(a,a)K_{\mathcal{U}}(z,\cent)}  \nonumber    \\
   &=& \frac{K_{\mathcal{U}}(\varphi_a(z),\cent) \, K_{\mathcal{U}}(\cent,\varphi_a(\cent))}{K_{\mathcal{U}}(\cent,\cent) \, K_{\mathcal{U}}(\varphi_a(z),\varphi_a(\cent))}    \nonumber \\
   &=& \frac{K_{\mathcal{U}}(\cent,\cent)}{K_{\mathcal{U}}(\varphi_a(z), \varphi_a (\cent))} ,  \label{Beq}
\end{eqnarray}
where the second equality follows from Lemma $\ref{Domchange}$. 
For $\rho>0$,
 let $B(t,\rho)$ denote the closed Bergman disk
 $\{z \in \mathcal{U} \,|\, \beta_{\mathcal{U}}(z,t) \le \rho \}$,
 which is a compact subset of $\mathcal{U}$.
For any $z,a \in \mathcal{U}$ with $\beta_{\mathcal{U}} (z,a) \leq \ru$, we have      
$(\varphi_a(z), \varphi_a (\cent)) \in B(\cent, \ru) \times \mathcal{U} $
because 
$   \beta_{\mathcal{U}} ( \varphi_a(z), \cent) =   \beta_{\mathcal{U}} ( \varphi_a(z), \varphi_a(a)) = \beta_{\mathcal{U}} (z,a) \leq \ru $. 
If $\mathcal{U}$ is a bounded symmetric domain, we know that 
\begin{center}
$(P)$ \ \  $K_{\mathcal{U}}(z_1,z_2)$ extends to the compact set $B(\cent, \ru) \times \mathrm{Cl}(\mathcal{U})$ as a continuous and non-zero function 
\end{center}
(see \cite[Theorem 2.10]{OL}). 
Therefore, we obtain Theorem $\ref{MainEst}$ from $(\ref{Beq})$. 
However, we don't know whether a nonsymmetric homogeneous domain has the property (P). 
Therefore, we take advantage of Theorem $\ref{MainResult1}$, 
which describes
the Bergman kernel of the minimal homogeneous domain 
$\mathcal{U}$
in terms of the Bergman kernel of the Siegel disks $\mathcal{U}_{n_j}$. 
Moreover,
 we have an estimate of the Bergman distance in Proposition $\ref{BergmanMet}$. 
Using these results, we will prove our main theorem. 

\noindent
\textbf{Proof of Theorem \ref{MainEst}.} \indent
Take any $z,a \in \mathcal{U}$ with $\beta_{\mathcal{U}} (z,a) \leq \ru$. 
Then, there exist $\zeta, \eta \in \Siegel$ such that $\sigma(\zeta) = z$ and $\sigma(\eta) =a$. 
By Theorem $\ref{MainResult1}$, we have 
\begin{eqnarray} 
\n{\frac{K_{\mathcal{U}}(z,a)}{K_{\mathcal{U}}(a,a)} }
    &=& \n{\frac{K_{\mathcal{U}}(\sigma(\zeta),\sigma(\eta))}{K_{\mathcal{U}}(\sigma(\eta),\sigma(\eta))} }  \nonumber \\
    &=& \prod^r_{j=1} \n{ \frac{K_{\mathcal{U}_{n_j}} (\hj (\zeta), \hj (\eta))}{K_{\mathcal{U}_{n_j}} (\hj (\eta),\hj (\eta))} }^{\frac{s_j}{n_j+1}} . \label{MainResult2-3} 
\end{eqnarray}
On the other hand,
 since $\beta_{\Siegel}(\zeta,\,\eta) = \beta_{\mathcal{U}}(z,a) \le \rho$,
 we obtain 
\begin{eqnarray} 
\beta_{\mathcal{U}_{n_j}}(\hj (\zeta),\hj (\eta))  
\leq M_{n_j} \ru   \label{MainResult2-5} 
\end{eqnarray}
from Proposition $\ref{BergmanMet}$. Since $\mathcal{U}_{n_j}$ is a bounded symmetric domain, there exists a positive constant $C_j$ such that 
\begin{eqnarray*} 
C_j^{-1} \leq \n{ \frac{K_{\mathcal{U}_{n_j}} (w,w^{\prime})}{K_{\mathcal{U}_{n_j}} (w^{\prime},w^{\prime})} } \leq C_j
\end{eqnarray*}
holds for any $w,w^{\prime} \in \mathcal{U}_{n_j}$ with $\beta_{\mathcal{U}_j}(w,w^{\prime})   \leq  M_{n_j} \ru$. 
Therefore, if $\beta_{\mathcal{U}}(z,a) \leq \rho$, we have  
\begin{eqnarray*} 
C_j^{-1} \leq \n{ \frac{K_{\mathcal{U}_{n_j}} (\hj (\zeta), \hj (\eta))}{K_{\mathcal{U}_{n_j}} (\hj (\eta),\hj (\eta))} } \leq C_j
\end{eqnarray*}
thanks to $(\ref{MainResult2-5})$. In view of $(\ref{MainResult2-3})$, we have 
\begin{eqnarray*} 
C_{\ru}^{-1}  \leq \n{\frac{K_{\mathcal{U}}(z,a)}{K_{\mathcal{U}}(a,a)} } \leq C_{\ru}
\end{eqnarray*}
with 
\begin{eqnarray*} 
C_{\ru} =  \prod^r_{j=1}  C_j^{\frac{s_j}{n_j+1}}. \qquad \qed
\end{eqnarray*}

As an application of Theorem \ref{MainEst}, we obtain another important estimate of $K_{\mathcal{U}}$. 
\begin{Prop} \label{prop:KU-Mrho}
For any $\ru >0$, there exists $M_{\ru}>0$ such that $M_{\ru}^{-1} \leq \n{K_{\mathcal{U}}(z,w)} \leq M_{\ru}$
 for any $z \in B(\cent,\ru)$ and $w \in \mathcal{U}$. 
\end{Prop}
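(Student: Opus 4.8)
The plan is to deduce Proposition~\ref{prop:KU-Mrho} from Theorem~\ref{MainEst} by relating $K_{\mathcal{U}}(z,w)$ to the quantity $K_{\mathcal{U}}(z,a)/K_{\mathcal{U}}(a,a)$ already controlled there, with $a$ chosen suitably, and then using a compactness argument on $B(\cent,\ru)$ together with the fact that on a bounded homogeneous domain $K_{\mathcal{U}}(w,w)$ is bounded below by a positive constant.

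**First I would** observe that by Proposition~\ref{I-K3.8} we have $K_{\mathcal{U}}(z,\cent) = \mathrm{Vol}(\mathcal{U})^{-1}$ for every $z$, so the case $w \in B(\cent, \ru)$ and the general case interact nicely. The key identity is the following: for arbitrary $z \in B(\cent,\ru)$ and $w \in \mathcal{U}$, write, using that $K_{\mathcal{U}}(\cdot,\cent)$ is the constant $\mathrm{Vol}(\mathcal{U})^{-1}$,
\begin{eqnarray*}
K_{\mathcal{U}}(z,w) = \mathrm{Vol}(\mathcal{U})^{-1} \, \frac{K_{\mathcal{U}}(z,w)}{K_{\mathcal{U}}(\cent,w)}
= \mathrm{Vol}(\mathcal{U})^{-1} \, \frac{K_{\mathcal{U}}(z,w) \, K_{\mathcal{U}}(\cent,\cent)}{K_{\mathcal{U}}(\cent,w)\, K_{\mathcal{U}}(z,\cent)} \cdot \mathrm{Vol}(\mathcal{U}) K_{\mathcal{U}}(z,\cent),
\end{eqnarray*}
so after simplification $K_{\mathcal{U}}(z,w) = \mathrm{Vol}(\mathcal{U})^{-1} \, R(z,w)$, where $R(z,w)$ denotes the cross-ratio-type expression $\frac{K_{\mathcal{U}}(z,w) K_{\mathcal{U}}(\cent,\cent)}{K_{\mathcal{U}}(\cent,w) K_{\mathcal{U}}(z,\cent)}$. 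Now apply Lemma~\ref{Domchange} with an automorphism $\varphi_w$ sending $w$ to $\cent$: $R(z,w)$ equals the corresponding expression with arguments $\varphi_w(z), \cent, \varphi_w(\cent), \cent$, which by the same kind of manipulation as in $(\ref{Beq})$ reduces to $K_{\mathcal{U}}(\varphi_w(z),\cent) \big/ K_{\mathcal{U}}(\varphi_w(\cent),\cent) \cdot (\dots)$ — more directly, $R(z,w) = \overline{K_{\mathcal{U}}(w,z)}\, / \,\overline{K_{\mathcal{U}}(w,\cent)} \cdot (\dots)$ and one recognizes, via $(\ref{Beq})$ applied with the roles of the two points, that $|R(z,w)| = |K_{\mathcal{U}}(\cent,\cent)| \big/ |K_{\mathcal{U}}(\varphi_z(w),\varphi_z(\cent))|$. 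In any case the upshot is: $|K_{\mathcal{U}}(z,w)|$ is, up to the fixed constant $\mathrm{Vol}(\mathcal{U})^{-1}$, equal to $|K_{\mathcal{U}}(\cent,\cent)| / |K_{\mathcal{U}}(\varphi_z(w), t)|$ — no wait, that is not bounded as $w$ varies. Let me instead argue: fix $z \in B(\cent,\ru)$; the function $w \mapsto K_{\mathcal{U}}(z,w)$ is anti-holomorphic and, since $K_{\mathcal{U}}(z,\cent) = \mathrm{Vol}(\mathcal{U})^{-1} \ne 0$, it is not identically zero; applying Theorem~\ref{MainEst} with $a := z$ gives $C_{\ru}^{-1} K_{\mathcal{U}}(z,z) \le |K_{\mathcal{U}}(w,z)| \le C_{\ru} K_{\mathcal{U}}(z,z)$ whenever $\beta_{\mathcal{U}}(w,z) \le \ru$ — but this only controls $w$ near $z$, not all of $\mathcal{U}$.

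**Here is the cleaner route.** Use Theorem~\ref{MainEst} with the pair $(\cent, z)$, i.e. take $a = z \in B(\cent,\ru)$ and the first argument equal to $\cent$: since $\beta_{\mathcal{U}}(\cent, z) \le \ru$, we get $C_{\ru}^{-1} \le \big| K_{\mathcal{U}}(\cent, z) / K_{\mathcal{U}}(z,z) \big| \le C_{\ru}$, and as $K_{\mathcal{U}}(\cent,z) = \mathrm{Vol}(\mathcal{U})^{-1}$ this yields $C_{\ru}^{-1} \mathrm{Vol}(\mathcal{U})^{-1} \le K_{\mathcal{U}}(z,z) \le C_{\ru}\, \mathrm{Vol}(\mathcal{U})^{-1}$ for all $z \in B(\cent,\ru)$; in particular $K_{\mathcal{U}}(z,z)$ is bounded above and below by positive constants on $B(\cent,\ru)$. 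Now fix such a $z$ and invoke the Cauchy–Schwarz inequality for reproducing kernels, $|K_{\mathcal{U}}(z,w)|^2 \le K_{\mathcal{U}}(z,z)\,K_{\mathcal{U}}(w,w)$; this gives the upper bound once we bound $K_{\mathcal{U}}(w,w)$ uniformly in $w \in \mathcal{U}$ — and $K_{\mathcal{U}}(w,w)$ is constant on the $\mathrm{Aut}(\mathcal{U})$-orbit up to the Jacobian factor... which is not bounded. So Cauchy–Schwarz is not enough by itself. Instead, combine it with the representative formula of Theorem~\ref{MainResult1}: $|K_{\mathcal{U}}(z,w)| = \mathrm{Vol}(\mathcal{U})^{-1}\prod_j |\det(I_{n_j} - \hj(z)\overline{\hj(w)})|^{-s_j}$, where $\hj(z) = \theta_{n_j}(z)$ and $\theta_{n_j}(w)$ both lie in the bounded domain $\mathcal{U}_{n_j}$, so $\|\theta_{n_j}(w)\| \le 1$ for all $w$; hence each factor $\det(I_{n_j} - \hj(z)\overline{\hj(w)})$ has modulus at most $\prod(1 + |\text{eigenvalues}|) \le 2^{n_j}$, giving the upper bound $|K_{\mathcal{U}}(z,w)| \le \mathrm{Vol}(\mathcal{U})^{-1} \prod_j 2^{n_j |s_j|} =: M_\ru$ uniformly (and this does not even need $z \in B(\cent,\ru)$). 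For the lower bound one needs $|\det(I_{n_j} - \hj(z)\overline{\hj(w)})|$ bounded above, i.e. $\|\hj(z)\| \cdot \|\overline{\hj(w)}\|$ bounded away from $1$; since $\|\theta_{n_j}(w)\| < 1$ may tend to $1$ as $w \to \partial \mathcal{U}$, the bound $\|\hj(z)\| < 1$ must compensate. The point is that $\theta_{n_j}$ restricted to the \emph{compact} set $B(\cent,\ru)$ has image inside a \emph{compact} subset of the open disk $\mathcal{U}_{n_j}$ (it is continuous and $B(\cent,\ru)$ is compact in $\mathcal{U}$), so there is $\delta_\ru \in (0,1)$ with $\|\theta_{n_j}(z)\| \le \delta_\ru$ for all $z \in B(\cent,\ru)$ and all $j$. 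Then $|\det(I_{n_j} - \hj(z)\overline{\hj(w)})| \le (1 + \delta_\ru)^{n_j}$... no — I need a \emph{lower} bound on $|\det|$, hence $\|\hj(z)\|\,\|\hj(w)\| < 1$; with $\|\hj(z)\| \le \delta_\ru$ and $\|\hj(w)\| < 1$ we only get $\|\hj(z)\|\,\|\hj(w)\| < \delta_\ru < 1$ when... that product is $< \delta_\ru$ which is what we want: $\det(I_{n_j} - A)$ with $\|A\| \le \delta_\ru$ satisfies $|\det(I-A)| \ge (1-\delta_\ru)^{n_j} > 0$. Hence $|K_{\mathcal{U}}(z,w)| \ge \mathrm{Vol}(\mathcal{U})^{-1} \prod_j (1-\delta_\ru)^{-n_j s_j}$ if all $s_j \ge 0$, and in general $\ge \mathrm{Vol}(\mathcal{U})^{-1} \prod_j \min\{(1-\delta_\ru), (1+\delta_\ru)\}^{n_j|s_j|}$; setting $M_\ru$ to be the maximum of this reciprocal and the upper bound constant finishes it.

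**The main obstacle** is precisely the lower bound: the upper bound is essentially free from the boundedness of the $\mathcal{U}_{n_j}$, but the lower bound requires that at least \emph{one} of the two arguments stays in a compact part of the domain so that the relevant Cayley-image points $\theta_{n_j}$ are bounded away from $\partial \mathcal{U}_{n_j}$ — this is exactly why the hypothesis $z \in B(\cent,\ru)$ is imposed on one variable only, and the proof hinges on the continuity of $\theta_{n_j} = \mathcal{C}_{n_j}\circ\Phi_{n_j}\circ\sigma^{-1}$ pushing the compact set $B(\cent,\ru)$ to a compact subset of the open Siegel disk. I would write this up by (i) recording the uniform two-sided bound on $K_{\mathcal{U}}(z,z)$ for $z\in B(\cent,\ru)$ from Theorem~\ref{MainEst}, (ii) noting $\|\theta_{n_j}(w)\|\le 1$ for all $w\in\mathcal{U}$ and $\le \delta_\ru<1$ for $w\in B(\cent,\ru)$ by compactness, and (iii) plugging both into the product formula of Theorem~\ref{MainResult1} to read off $M_\ru := \mathrm{Vol}(\mathcal{U})\cdot\prod_{j=1}^r \big(\max\{1+\delta_\ru,(1-\delta_\ru)^{-1}\}\big)^{n_j|s_j|}$, which serves for both inequalities.
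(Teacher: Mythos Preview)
Your final plan (the three steps (i)--(iii) at the end) does work, once you drop the unused step (i) and fix a small sign confusion: you wrote that the upper bound on $|K_{\mathcal{U}}(z,w)|$ ``does not even need $z\in B(\cent,\ru)$'' because $|\det(I-\hj(z)\overline{\hj(w)})|\le 2^{n_j}$. That is wrong whenever $s_j>0$, since the exponent in the product formula is $-s_j$ and an \emph{upper} bound on $|\det|$ then only yields a \emph{lower} bound on $|K_{\mathcal{U}}|$. Fortunately your final argument corrects this: from $\|\hj(z)\|_{\mathrm{op}}\le\delta_\ru<1$ (compactness of $B(\cent,\ru)$ and continuity of $\hj$) and $\|\hj(w)\|_{\mathrm{op}}<1$, the spectral radius of $\hj(z)\overline{\hj(w)}$ is at most $\delta_\ru$, whence $(1-\delta_\ru)^{n_j}\le|\det(I-\hj(z)\overline{\hj(w)})|\le(1+\delta_\ru)^{n_j}$, and both bounds on $|K_{\mathcal{U}}(z,w)|$ follow regardless of the signs of the $s_j$.

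That said, the paper's proof is quite different and much shorter. It never unpacks the product formula or appeals to compactness of $\hj(B(\cent,\ru))$; instead it uses exactly the cross-ratio manipulation you began with and then abandoned. Using minimality ($K_{\mathcal{U}}(\cdot,\cent)\equiv\mathrm{Vol}(\mathcal{U})^{-1}$) and Lemma~\ref{Domchange} with $\varphi_w$ (where $\varphi_w(w)=\cent$), one gets
\[
\left|\frac{K_{\mathcal{U}}(z,w)}{K_{\mathcal{U}}(\cent,\cent)}\right|
=\left|\frac{K_{\mathcal{U}}(z,w)\,K_{\mathcal{U}}(\cent,\cent)}{K_{\mathcal{U}}(z,\cent)\,K_{\mathcal{U}}(\cent,w)}\right|
=\left|\frac{K_{\mathcal{U}}(\varphi_w(\cent),\varphi_w(\cent))}{K_{\mathcal{U}}(\varphi_w(z),\varphi_w(\cent))}\right|,
\]
and since $\beta_{\mathcal{U}}(\varphi_w(z),\varphi_w(\cent))=\beta_{\mathcal{U}}(z,\cent)\le\ru$, Theorem~\ref{MainEst} applied with $a=\varphi_w(\cent)$ bounds the right-hand side between $C_\ru^{-1}$ and $C_\ru$. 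This gives $M_\ru=C_\ru\,K_{\mathcal{U}}(\cent,\cent)$ in three lines. The virtue of this route is that all the hard analysis (product formula, equivariance, Lipschitz bound on Bergman distances) is already packaged inside Theorem~\ref{MainEst}; your approach essentially reproves a piece of that theorem by hand. Your argument does have the minor advantage of giving an explicit constant in terms of $\delta_\ru$ and the structural data $n_j,|s_j|$.
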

\begin{proof}
Similarly to $(\ref{Beq})$, we obtain 
\begin{align}
\n{\frac{K_{\mathcal{U}}(z,w)}{K_{\mathcal{U}}(\cent,\cent)} } 
   = \n{\frac{K_{\mathcal{U}}(z,w) \, K_{\mathcal{U}}(\cent,\cent)}{K_{\mathcal{U}}(z,\cent) \, K_{\mathcal{U}}(\cent,w)} } 
   = \n{\frac{K_{\mathcal{U}}(\varphi_w(\cent),\varphi_w(\cent)) }{K_{\mathcal{U}}(\varphi_w(z),\varphi_w(\cent))} }.  \label{5.4}
\end{align}
Since $\beta_{\mathcal{U}}(\varphi_w(z),\varphi_w(\cent)) = \beta_{\mathcal{U}}(z,\cent) \leq \ru $,
 there exists $C_{\ru} > 0$ such that the right hand side of $(\ref{5.4})$ is less than or equal to $C_{\ru}$ by Theorem \ref{MainEst}. 
Note that the constant $C_{\ru}$ is independent of $z$ and $w$. Therefore, we obtain 
$$
C_{\ru}^{-1} K_{\mathcal{U}}(\cent,\cent) \leq \n{K_{\mathcal{U}}(z,w)} \leq C_{\ru} K_{\mathcal{U}}(\cent,\cent)
$$
 for any $z \in B(\cent,\ru)$ and $w \in \mathcal{U}$. 
\end{proof}

\noindent
\textbf{Acknowledgements.} \ 
The authors would like to express our gratitude to Professors Takaaki Nomura and Takeo Ohsawa for helpful discussions.

\end{document}